\DeclareMathOperator{\ann}{ann}
\DeclareMathOperator{\Ass}{Ass}
\DeclareMathOperator{\depth}{depth}
\DeclareMathOperator{\eend}{end}
\DeclareMathOperator{\indeg}{indeg}
\DeclareMathOperator{\reg}{reg}
\DeclareMathOperator{\Tor}{Tor}
\renewcommand{\ge}{\geqslant}
\renewcommand{\le}{\leqslant}
\newcommand{\bz}{\mathbb{Z}}
\newcommand{\fm}{\mathfrak{m}}
\newcommand{\fn}{\mathfrak{n}}
\newcommand{\fp}{\mathfrak{p}}
\newcommand{\fq}{\mathfrak{q}}
\newcommand{\fu}{\mathfrak{u}}
\newcommand{\lra}{\longrightarrow}
\theoremstyle{plain}
\newtheorem{theorem}{Theorem}[section]
\newtheorem{lemma}[theorem]{Lemma}
\newtheorem{proposition}[theorem]{Proposition}
\newtheorem{corollary}[theorem]{Corollary}
\theoremstyle{definition}
\newtheorem{definition}[theorem]{Definition}
\newtheorem{example}[theorem]{Example}
\newtheorem{notation}[theorem]{Notation}
\newtheorem{para}[theorem]{}
\newtheorem{setup}[theorem]{Setup}
\theoremstyle{remark}
\newtheorem{remark}[theorem]{Remark}
\numberwithin{equation}{section}
\title[On the asymptotic behaviour of the Vasconcelos invariant]{On the asymptotic behaviour of the Vasconcelos invariant for graded modules}
\author[L.~Fiorindo]{Luca Fiorindo}
\address{Dipartimento di Matematica, Dipartimento di Eccellenza 2023-2027, Università di Genova, Via Dodecaneso 35, 16146 Genova, Italy}
\email{luca.fiorindo@dima.unige.it}
\urladdr{\url{https://orcid.org/0000-0002-6435-0128}}
\author[D.~Ghosh]{Dipankar Ghosh}
\address{Department of Mathematics, Indian Institute of Technology Kharagpur, West Bengal - 721302, India}
\email{dipankar@maths.iitkgp.ac.in, dipug23@gmail.com}
\urladdr{\url{https://orcid.org/0000-0002-3773-4003}}
\date{May 25, 2024}
\subjclass[2010]{Primary 13A02, 13A15, 13A30, 13D45}%; Secondary 13D07, 13B22
\keywords{Graded rings and modules; Associate primes; v-numbers; Castelnuovo-Mumford regularity}
\begin{document}
\pagenumbering{arabic}
\thispagestyle{empty}
\begin{abstract}
	The notion of Vasconcelos invariant, known in the literature as v-number, of a homogeneous ideal in a polynomial ring over a field was introduced in \cite{CSTPV} to study the asymptotic behaviour of the minimum distance of projective Reed-Muller type codes. We initiate the study of this invariant for graded modules.	Let $R$ be a Noetherian $\mathbb{N}$-graded ring, and $M$ be a finitely generated graded $R$-module.	The v-number $v(M)$ can be defined as the least possible degree of a homogeneous element $x$ of $M$ for which $(0:_Rx)$ is a prime ideal of $R$. For a homogeneous ideal $I$ of $R$, we mainly prove that $v(I^nM)$ and $v(I^nM/I^{n+1}M)$ are eventually linear functions of $n$. In addition, if $(0:_M I)=0$, then $v(M/I^{n}M)$ is also eventually linear with the same leading coefficient as that of $v(I^nM/I^{n+1}M)$. These leading coefficients are described explicitly. The result on the linearity of $v(M/I^{n}M)$ considerably strengthens a recent result of Conca \cite{Conca} which was shown when $R$ is a domain and $M=R$, and Ficarra-Sgroi \cite{FS} where the polynomial case is treated.
\end{abstract}
\maketitle

\section{Introduction}
Recently, there has been some interest in the study of Vasconcelos invariants of homogeneous ideals in a polynomial ring over a field. The concept of Vasconcelos invariant appears in different areas of mathematics such as coding theory, algebraic geometry, and combinatorics. This numerical invariant is known in the literature as v-number, and has been named after the mathematician Wolmer Vasconcelos.
It was introduced in \cite{CSTPV} to express the regularity index of the minimum distance function of projective Reed-Muller type codes. In particular, in the same article, the authors connect the Vasconcelos invariant to the degree of projective varities consisting of finitely many points, see \cite[p.~16]{CSTPV}.
In combinatorics and graph theory, it has been showed a connection between the Vaconcelos invariant and the independent domination number. In \cite[3.5 and 3.6]{JV21}, the authors proved that the v-number of an edge ideal of a clutter corresponds to the independent domination number of the clutter itself. Thus, a combinatorial interpretation for the v-number of a square-free monomial ideal is provided.
% Moreover, in \cite[Thm.~3.20]{JV21}, they proved that the independent domination number of a whisker graph is the v-number of the corresponding edge ideal.

In this article, our aim is to extend the notion of v-numbers from homogeneous ideals in a polynomial ring to graded modules over a graded commutative Noetherian ring. Moreover, we interpret this invariant as the initial degree of certain graded module, and prove a number of results.

%Recently, there has been some interest in the study of Vasconcelos invariants of homogeneous ideals in a polynomial ring over a field. This numerical invariant is known in the literature as v-number. It was introduced in \cite{CSTPV} to express the regularity index of the minimum distance function, and has been named after the mathematician Wolmer Vasconcelos. See \cite[p.~16]{CSTPV} for a geometric description of v-numbers of ideals of certain projective varieties. A combinatorial interpretation is given in \cite[Thm.~3.5]{JV21} for the v-number of a square-free monomial ideal. In this article, our aim is to extend the notion of v-numbers from homogeneous ideals \new{in a polynomial ring} to graded modules over a graded commutative Noetherian ring. Moreover, we interpret this invariant as the initial degree of certain graded module, and prove a number of results.

\begin{setup}\label{setup}
	Throughout, unless specified otherwise, let $R = R_0[x_1,\ldots,x_d]$ be a commutative Noetherian $\mathbb{N}$-graded algebra, where $R_0$ denotes the $0$th graded component of $R$, and $\deg(x_i)\ge 1$ for $1\le i\le d$. Let $M$ be a finitely generated $\mathbb{Z}$-graded $R$-module, and $I$ be a homogeneous ideal of $R$. Let $N$ be a graded submodule of $M$ $($e.g., $N=\mathfrak{a}M$ for some homogeneous ideal $\mathfrak{a}$ of $R$$)$. Let $J$ be a reduction ideal of $I$ $($possibly, $J=I$$)$, and $J$ is generated by homogeneous elements $y_1,\ldots,y_c$ of degree $d_1\le\cdots\le d_c$ respectively.
\end{setup}

The set of associated prime ideals of the $R$-module $M$ is denoted by $\Ass_R(M)$. For $u\in\mathbb{Z}$, we usually write $M_u$ for the $u$th graded component of $M$. When $R$ is a polynomial ring over a field $R_0$, and $I$ is a homogeneous ideal of $R$, for each $\fp\in\Ass_R(R/I)$, the v-number of $I$ at $\fp$ is defined in \cite[Defn.~4.1]{CSTPV} as
\begin{equation}\label{v-num-ideal}
	v_\fp(I) := \inf\{ u\ge 0 : \mbox{there exists $f\in R_u$ such that } \fp = (I:_Rf) \}.
\end{equation}
The number $v(I) := \inf\{v_\fp(I) : \fp\in\Ass(R/I) \}$ is called the v-number of $I$. Generalizing this notion, we define v-number of $M$ as follows.

\begin{definition}\label{defn:v-num-M}
	For each $\fp\in\Ass_R(M)$, the v-number of $M$ at $\fp$ is the number $v_\fp(M) := \inf\{ u : \mbox{there exists $x\in M_u$ such that } \fp = (0:_Rx) \}$. Then, the v-number of $M$ is $v(M) := \inf\{v_\fp(M) : \fp\in\Ass_R(M) \}$. By convention, $v(0) = \infty$.
\end{definition}

\begin{remark}
	Note that each $\fp\in\Ass_R(M)$ is a homogeneous ideal of $R$. Moreover, $\fp = (0:_Rx)$ for some homogeneous element $x$ of $M$, see, e.g., \cite[1.5.6.(b)]{BH93}. Thus the invariants $v_\fp(M)$ and $v(M)$ in Definition~\ref{defn:v-num-M} are well-defined.
\end{remark}

\begin{remark}
	When $R$ is a polynomial ring over a field $R_0$, and $I$ is a homogeneous ideal of $R$, setting $M:=R/I$ in {\rm Definition~\ref{defn:v-num-M}}, one obtains $v_\fp(R/I)$ and $v(R/I)$, which are same as the numbers $v_\fp(I)$ and $v(I)$ respectively according to \cite[Defn.~4.1]{CSTPV}. Thus Definition~\ref{defn:v-num-M} recovers \cite[Defn.~4.1]{CSTPV}.
\end{remark}

We use the following notations frequently. With {\rm Setup~\ref{setup}},
\[
	(N:_MI) := \{x\in M:Ix\subseteq N\}, \quad \Gamma_I(M) := \bigcup_{n \ge 1}\big(0:_M I^n\big)% \mbox{ and } \ann_M(I) := (0:_M I).
\]
and $\ann_M(I) := (0:_M I)$. Denote $\indeg(M) := \inf\{n : M_n\neq 0 \}$ and $\eend(M) := \sup\{n : M_n\neq 0 \}$. By convention, $\indeg(0) =\infty$ and $\eend(0)=-\infty$. Inspired by \cite[Lem.~1.2]{Conca}, we interpret $v_{\fp}(M)$ as the initial degree of certain graded module as follows. It highly generalizes \cite[Prop.~4.2]{CSTPV} .
\begin{lemma}\label{lem:v-num-indeg}
	With {\rm Setup~\ref{setup}}, let $\fp\in\Ass_R(M)$. Set $X_\fp := \{\fq \in \Ass_R(M) : \fp \subsetneq \fq \}$. Let $V=R$ if $X_\fp=\emptyset$, otherwise $V = \prod_{\fq\in X_\fp}\fq$.
%	Denote
%	\[
%		\ann_M(\fp) := (0:_M\fp) \; \mbox{ and } \; \Gamma_V(M) := \bigcup_{n \ge 1}\big(0:_M V^n\big).
%	\]
	Then $$v_\fp(M) = \indeg\big(\ann_M(\fp)/\ann_M(\fp)\cap\Gamma_V(M)\big).$$
\end{lemma}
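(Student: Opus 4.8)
The goal is to show $v_\fp(M) = \indeg\big(\ann_M(\fp)/(\ann_M(\fp)\cap\Gamma_V(M))\big)$. Let me set $A := \ann_M(\fp) = (0:_M \fp)$ and $B := A \cap \Gamma_V(M)$. First I would unravel what elements of $A$ look like: for a homogeneous $x \in M_u$, we have $x \in A$ iff $\fp \subseteq (0:_R x)$. So the degrees $u$ appearing in $A$ are exactly the degrees of homogeneous $x$ with $\fp \subseteq (0:_R x)$, whereas $v_\fp(M)$ asks for the least degree with $\fp = (0:_R x)$ \emph{exactly}. The quotient by $B$ is precisely the device that upgrades "$\supseteq$" to "$=$", and the heart of the proof is the following claim: for homogeneous $x \in A$, the image $\bar x$ in $A/B$ is nonzero if and only if $(0:_R x) = \fp$ (not just $\supseteq$). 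Granting this claim, both sides of the asserted equality are the infimum of the same set of degrees $\{u : \exists\, x \in M_u \text{ with } (0:_R x) = \fp\}$, and we are done.

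\textbf{Proving the claim.} The key step is to identify $(0:_R x)$ for $x \in A$. Since $\fp \subseteq (0:_R x)$ and $\fp$ is prime, if $(0:_R x) \neq \fp$ then $(0:_R x)$ is a homogeneous ideal strictly containing $\fp$; I would argue that then every associated prime of $Rx \cong R/(0:_R x)$ is an associated prime of $M$ strictly containing $\fp$ (here I use that $(0:_R x) \subseteq \fq$ for each $\fq \in \Ass_R(Rx) \subseteq \Ass_R(M)$, together with $\fp \subsetneq (0:_R x)$), hence lies in $X_\fp$. Therefore $(0:_R x) \supseteq \sqrt{(0:_R x)} = \bigcap_{\fq \in \Ass(Rx)} \fq \supseteq$ a power of $\prod_{\fq \in X_\fp} \fq = V$ (by prime avoidance / the standard fact that a product of the associated primes, suitably powered, annihilates a Noetherian module), so $V^k x = 0$ for some $k$, i.e. $x \in \Gamma_V(M)$, hence $x \in B$ and $\bar x = 0$. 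Conversely, if $(0:_R x) = \fp$ exactly, then $\Ass_R(Rx) = \{\fp\}$, so no power of $V$ can kill $x$ unless $V \subseteq \fp$; but $V$ is a product of primes each strictly containing $\fp$, so $V \not\subseteq \fp$ (as $\fp$ is prime, $V \subseteq \fp$ would force some $\fq \subseteq \fp$, contradicting $\fp \subsetneq \fq$), whence $x \notin \Gamma_V(M)$ and $\bar x \neq 0$. The case $X_\fp = \emptyset$, $V = R$ is immediate since then $\Gamma_R(M)$ consists of elements killed by a power of $R$, i.e. $\Gamma_R(M) = 0$, so $B = 0$ and every $x \in A$ already has $(0:_R x) = \fp$ because $\fp$ is maximal in $\Ass_R(M)$.

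\textbf{Assembling the equality.} With the claim in hand, $\indeg(A/B) = \inf\{\deg x : x \in A \text{ homogeneous}, \bar x \neq 0\} = \inf\{u : \exists\, x \in M_u,\ (0:_R x) = \fp\} = v_\fp(M)$; the first equality is the definition of $\indeg$ applied to the graded module $A/B$ (noting $A$ and $B = A \cap \Gamma_V(M)$ are graded, so $A/B$ is graded and its nonzero homogeneous elements lift to homogeneous elements of $A$ outside $B$), the middle equality is the claim, and the last is Definition~\ref{defn:v-num-M}.

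\textbf{Main obstacle.} I expect the subtle point to be the direction "$(0:_R x) \supsetneq \fp \implies x \in \Gamma_V(M)$": one must pass from $(0:_R x)$ strictly containing the prime $\fp$ to the conclusion that \emph{all} associated primes of $Rx$ strictly contain $\fp$ (so that they all contribute to $X_\fp$ and hence to $V$), and then get a power of $V$ annihilating $x$. The first part uses that $(0:_R x) \subseteq \fq$ for $\fq \in \Ass_R(Rx)$ combined with $\fp \subsetneq (0:_R x)$; the second is the standard primary-decomposition fact that for a finitely generated module some finite product of powers of its associated primes annihilates it. A degenerate case to dispatch separately is when the infimum is $\infty$, i.e. no homogeneous $x$ has $(0:_R x) = \fp$ exactly — but then $A/B = 0$ and both sides equal $\infty$, consistent with the convention $\indeg(0) = \infty$.
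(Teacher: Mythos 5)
Your proof is correct and follows essentially the same route as the paper: your biconditional claim (``$\bar x\neq 0$ in $A/B$ iff $(0:_Rx)=\fp$'') packages exactly the paper's two inequalities, with the same two key ingredients ($V\not\subseteq\fp$ because every $\fq\in X_\fp$ strictly contains $\fp$; and, when $(0:_Rx)\supsetneq\fp$, all associated primes of $Rx$ lie in $X_\fp$ so a power of $V$ kills $x$). The only blemish is cosmetic: the chain ``$(0:_Rx)\supseteq\sqrt{(0:_Rx)}=\bigcap\fq\supseteq\cdots$'' has its first inclusion reversed as written; what you mean (and correctly use) is that $(0:_Rx)$ contains a \emph{power} of $\sqrt{(0:_Rx)}\supseteq\prod_{\fq\in\Ass(Rx)}\fq\supseteq V$.
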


When $R$ is a polynomial ring over a field, for various classes of homogeneous ideals $I$, it has been shown that $v(R/I)\le\reg(R/I)$, see, e.g., \cite[Thm.~4.10]{CSTPV}, \cite[Thm.~3.13]{JV21}, \cite[p.~905]{SS22},
%\cite[Thm.~4.5]{AKS},
\cite[Thm.~3.8]{Ka23} and Remark~\ref{rmk:BM}. (Here $\reg(M)$ denotes the Castelnuovo-Mumford regularity of $M$, cf.~\ref{para:reg}). However, for each positive integer $n$, there are homogeneous ideals $I_n$ and $J_n$ for which $v(R/I_n)-\reg(R/I_n)=n$ and $\reg(R/J_n)-v(R/J_n)=n$, see \cite[Thm.~2]{Ci23} and \cite[Thm.~3.10]{Ka23} respectively.
With Setup~\ref{setup}, if $R$ is standard graded over a local ring $R_0$, then $v(M)\le\reg(M)$ whenever $\depth(\fm,M)=0$, where $\fm$ is the maximal homogeneous ideal of $R$, see Proposition~\ref{prop:comparison-gen} and Remark~\ref{rmk:v-reg}.
% \old{With Setup~\ref{setup}, under certain conditions, we observe that $v(M)\le\reg(M)$ (cf.~Proposition~\ref{prop:comparison-gen}).
% Thus, if $R$ is standard graded and $R_0$ is local, then $v(M)\le\reg(M)$ whenever $\depth(\fm,M)=0$, where $\fm$ is the maximal homogeneous ideal of $R$, see Remark~\ref{rmk:v-reg}.}
% Here $\depth(M)$ denotes the length of a maximal $M$-regular sequence.}
%Thus we get the following.
% , for graded modules of depth zero, the following holds.% (cf.~Remark~\ref{rmk:BM}).

% \begin{proposition}\label{prop:comparison-v-num-reg}
% 	With {\rm Setup~\ref{setup}}, let \new{$R$ be standard graded, $R_0$ is local}, and $\depth(M)=0$. Then $v(M)\le\reg(M).$
% 	In particular, $v(R/I)\le\reg(R/I)$ whenever $\depth(R/I)=0$.
% \end{proposition}

A classical result of Brodmann \cite{Br79} states that both the sets $\Ass_R(I^nM/I^{n+1}M)$ and $\Ass_R(M/I^{n}M)$ are eventually constants. Set $\mathcal{A}(I) := \Ass_R(R/I^{n})$ for all $n\gg 0$.
%Denote these stabilized sets by $\mathcal{B}_M(I)$ and $\mathcal{A}_M(I)$ respectively.
Recently, in \cite{Conca}, Conca proved that when $R$ is domain, for each $\fp\in\mathcal{A}(I)$, the function $v_\fp(R/I^{n})$ is eventually linear in $n$, i.e., $v_\fp(R/I^{n}) = an+b$ for all $n\gg 0$, where $a$ and $b$ are some constants. When $R$ is a polynomial ring over a field, this result has been shown independently by Ficarra-Sgroi in \cite[Thm.~3.1]{FS}. With Setup~\ref{setup}, the sets $\Ass_R(I^nM/I^nN)$ and $\Ass_R(M/I^nN)$ are also eventually constants due to McAdam-Eakin \cite[Prop.~2]{ME79} (cf.~\ref{para:ME-West})
%\footnote{McAdam-Eakin actually proved that if $\mathcal{R}$ is a standard graded Noetherian ring over $R$, then $\Ass_R(\mathcal{R}_n)$ is eventually constant. This proof can be modified to give the result for a finitely generated graded $\mathcal{R}$-module $\mathcal{M}$, see \cite[Thm.~3.4]{We04}. Finally, apply this result on the graded module $\mathcal{M} = \bigoplus_{n\ge 0}I^nM/I^nN$ over $\mathcal{R} = \bigoplus_{n\ge 0}I^n$.} 
and Katz-West \cite[Prop.~5.2]{KW04} respectively. So a natural question arises whether the functions $v(I^nM/I^nN)$ and $v(M/I^nN)$ are eventually linear in $n$? Another motivation of this question came from a result of Trung-Wang \cite[Thm.~3.2]{TW05} that $\reg(I^nM)$ is eventually a linear function of $n$. This was proved earlier for polynomial rings over a field by Cutkosky-Herzog-Trung \cite[Thm. 1.1]{CHT99} and Kodiyalam \cite{Ko00} independently.

\begin{notation}\label{notation}
	With Setup~\ref{setup},
%	in view of \cite[Prop.~2]{ME79} and \cite[Prop.~5.2]{KW04}, 
	we denote
	$$\mathcal{B}_N^M(I) := \Ass_R(I^nM/I^nN) \, \mbox{ and } \, \mathcal{A}_N^M(I) := \Ass_R(M/I^nN) \; \mbox{ for all }n\gg 0.$$
\end{notation}

Using Lemma~\ref{lem:v-num-indeg}, we prove Theorem~\ref{thm:main}, and deduce the following.

\begin{theorem}[See Theorem~\ref{thm:linearity-InM/InN} for more details]\label{thm:intro-lin-InM/InN}
    With {\rm Setup~\ref{setup}}
	and {\rm Notation~\ref{notation}}, let $\fp\in\mathcal{B}_N^M(I)$. Then, there exist $a\in\{d_1,\ldots,d_c\}$ and $b\in\mathbb{Z}$ such that $v_{\fp}(I^nM/I^nN) = an+b$ for all $n\gg 0$. Furthermore, both the functions
    \begin{center}
        $\indeg(I^nM/I^nN)$ \ and \ $v(I^nM/I^nN)$
    \end{center}
    are eventually linear in $n$ with the same leading coefficient $d_{\delta}\in \{d_1,\ldots,d_c\}$.
\end{theorem}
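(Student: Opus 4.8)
The plan is to reduce everything to the initial-degree interpretation provided by Lemma~\ref{lem:v-num-indeg}, and then control those initial degrees using a Rees-algebra/graded-module argument together with the Brodmann-type stabilization of associated primes. First I would fix $n$ large enough that $\Ass_R(I^nM/I^nN) = \mathcal{B}_N^M(I)$ is constant (possible by \ref{para:ME-West}), and fix $\fp \in \mathcal{B}_N^M(I)$. By Lemma~\ref{lem:v-num-indeg} applied to the module $I^nM/I^nN$ we have
\[
	v_\fp(I^nM/I^nN) = \indeg\big(\ann_{I^nM/I^nN}(\fp) \big/ \ann_{I^nM/I^nN}(\fp)\cap\Gamma_{V}(I^nM/I^nN)\big),
\]
where $V$ depends only on $X_\fp = \{\fq\in\mathcal{B}_N^M(I) : \fp\subsetneq\fq\}$, which is also constant for $n\gg 0$. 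So the whole point is to show the right-hand side is eventually linear in $n$.

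The key idea is to assemble these modules over the Rees algebra. Let $\mathcal{R} = \bigoplus_{n\ge 0} I^n$ be the Rees algebra of $I$ (Noetherian since $R$ is Noetherian), and consider the finitely generated bigraded $\mathcal{R}$-module $\mathcal{M} = \bigoplus_{n\ge0} I^nM/I^nN$, which is a subquotient of $\bigoplus_n I^nM \oplus \bigoplus_n I^nN$ built from the Rees modules of $M$ and $N$. Then for the fixed $\fp$, the submodules $\bigoplus_n \ann_{I^nM/I^nN}(\fp)$ and $\bigoplus_n \ann(\fp)\cap\Gamma_V(I^nM/I^nN)$ are again finitely generated bigraded $\mathcal{R}$-modules (the annihilator-of-an-ideal and the $\Gamma_V$ operations are compatible with the $\mathcal{R}$-structure; here I use that $\fp$ and $V$ are fixed and do not vary with $n$, which is exactly what the stabilization of $\Ass$ buys us). Their difference $\mathcal{Q}$ is therefore a finitely generated bigraded $\mathcal{R}$-module, and $v_\fp(I^nM/I^nN)$ is the initial degree (in the internal $\mathbb{Z}$-grading) of the $n$th graded piece $\mathcal{Q}_n$. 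Now invoke the standard structure theory: since a reduction $J = (y_1,\ldots,y_c)$ of $I$ with $\deg y_i = d_i$ has $\mathcal{R}$ module-finite over the subalgebra generated by $y_1,\ldots,y_c$, the module $\mathcal{Q}$ is finitely generated over a polynomial-type algebra in variables of degrees $d_1,\ldots,d_c$; the minimal internal degree of generators in the $n$th component then grows, for $n\gg0$, like $\min_i(d_i\cdot(\text{stuff}) + \text{const})$, and a routine argument with the finitely many bigraded generators shows $\indeg(\mathcal{Q}_n) = an+b$ eventually with $a \in \{d_1,\ldots,d_c\}$.

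For the second assertion, $\indeg(I^nM/I^nN)$ is handled the same way but more simply: $\bigoplus_n I^nM/I^nN$ is itself a finitely generated bigraded $\mathcal{R}$-module, so $\indeg(I^nM/I^nN) = a'n + b'$ for $n\gg0$ with $a'\in\{d_1,\ldots,d_c\}$ by the same generator-degree bookkeeping. Finally, $v(I^nM/I^nN) = \min_{\fp\in\mathcal{B}_N^M(I)} v_\fp(I^nM/I^nN)$ is a minimum of finitely many eventually-linear functions, hence eventually linear, with leading coefficient the minimum of the finitely many slopes that occur; since each such slope lies in $\{d_1,\ldots,d_c\}$ and is bounded below by the slope of $\indeg(I^nM/I^nN)$ (because $\ann_M(\fp)$ embeds in the module and one checks $v_\fp \ge \indeg$ at each stage), one argues that the common leading coefficient is some $d_\delta$. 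The main obstacle I anticipate is the second step: verifying that the annihilator submodule $\bigoplus_n \ann_{I^nM/I^nN}(\fp)$ and especially the $\Gamma_V$-torsion submodule $\bigoplus_n \ann(\fp)\cap\Gamma_V(I^nM/I^nN)$ really are \emph{finitely generated} over the Rees algebra $\mathcal{R}$ — this is where the Brodmann/McAdam-Eakin/Katz-West stabilization of associated primes must be used in an essential way (so that $V$ is a single fixed ideal), and one must be careful that $\Gamma_V(-)$ commutes appropriately with the Rees-algebra action and with passing to graded components. Once that finiteness is in hand, the linearity is a formal consequence of the theory of finitely generated graded modules over a Noetherian graded algebra generated in the degrees $d_1,\ldots,d_c$.
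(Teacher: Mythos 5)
Your overall strategy coincides with the paper's: interpret $v_\fp$ via Lemma~\ref{lem:v-num-indeg}, assemble everything into a finitely generated bigraded module over the Rees algebra $\mathscr{R}(J)$, and take an infimum of finitely many eventually linear functions. The finite-generation issue you flag as the ``main obstacle'' is in fact the easy part: the paper forms $\mathscr{M}=\ann_{\mathscr{L}}(\fp)/\ann_{\mathscr{L}}(\fp)\cap\Gamma_V(\mathscr{L})$ directly inside the bigraded module $\mathscr{L}=\bigoplus_n I^nM/I^nN$, so it is automatically finitely generated by Noetherianity, and since $\fp$ and $V$ live in $R$ (the degree-$0$ part for the Rees grading) these operations commute with taking the $(n,*)$-components. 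Also, the eventual linearity of $\indeg$ of the graded pieces is not quite ``routine bookkeeping'': lowest-degree generators can be annihilated, and the paper either cites \cite[Prop.~3.1]{CGN22} or pins the slope down with an explicit decomposition of $\mathscr{L}_{(n,*)}$ governed by $\delta=\inf\{j: y_j\notin\sqrt{\ann(\mathscr{L})}\}$.

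The genuine gap is the final claim that $v(I^nM/I^nN)$ and $\indeg(I^nM/I^nN)$ have the \emph{same} leading coefficient. Your argument only yields one inequality: since $v_\fp(L)\ge\indeg(L)$ for every $\fp$, the slope of $v$ is at least the slope of $\indeg$, i.e.\ at least $d_\delta$. Nothing in your sketch bounds the slope of $v$ from above by $d_\delta$; a priori each $v_\fp$ could grow with a strictly larger slope $d_j$, $j>\delta$. This reverse inequality is the substantive point of the paper's proof (part (3) of Theorem~\ref{thm:main}): one stabilizes the colons $(0:_{\mathscr{L}}y_\delta^m)$ at some $m_0$, shows $(0:_{\mathscr{L}_{(n_1,*)}}y_\delta^{m_0})$ is a proper submodule, and uses the injections
\[
\frac{\mathscr{L}_{(n-m-1,*)}}{\bigl(0:_{\mathscr{L}_{(n-m-1,*)}}y_\delta^{m+1}\bigr)}(-d_\delta)\xrightarrow{\;y_\delta\;}\frac{\mathscr{L}_{(n-m,*)}}{\bigl(0:_{\mathscr{L}_{(n-m,*)}}y_\delta^{m}\bigr)}
\]
together with the monotonicity $v(M)\le v(L)$ for submodules (Proposition~\ref{prop:L-M-sub}) to telescope down to $v(\mathscr{L}_{(n,*)})\le n\,d_\delta+\mathrm{const}$. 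You need to supply an argument of this kind (or some other upper bound on $v$ that is linear with slope $d_\delta$) before the ``same leading coefficient'' assertion is proved.
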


\begin{remark}
	In Theorem~\ref{thm:intro-lin-InM/InN}, one particularly may consider $N=0$ or $N=IM$. Note that Theorem~\ref{thm:intro-lin-InM/InN} is contained in Theorem~\ref{thm:linearity-InM/InN}, where the exact description of the leading coefficient $d_{\delta}$ is given, see \ref{para:delta}.
\end{remark}

If $L$ is a graded submodule of $M$, then $v(M)\le v(L)$, see Proposition~\ref{prop:L-M-sub}. Thus Theorem~\ref{thm:linearity-InM/InN} also provides a linear bound of $v(M/I^nN)$, cf.~Corollary~\ref{cor:lower-bdd}.

Our next theorem highly strengthens both the results \cite[Thms.~3.1 and 4.1]{FS} and \cite[Thm.~1.1]{Conca} of Ficarra-Sgroi and Conca in several directions. Note that $(0:_R I)=0$ when $R$ is a domain and $I$ is a non-zero ideal. Moreover, $M=R$, or $N=M$, or $J=I$ all are special cases in Setup~\ref{setup}.
%every ideal is a reduction ideal of itself.
%So Theorem~\ref{thm:Conca-gen} strengthens Conca's result in two directions???
%Our main result on asymptotic linearity is the following.

% \begin{customtheorem}{\ref{thm:Conca-gen}}\label{thm:intro-Conca-gen}
%     With {\rm Setup~\ref{setup}} and {\rm Notation~\ref{notation}}, let $(0:_M I)=0$.
% 	\begin{enumerate}[\rm (1)]
% 		\item Let $\fp\in\mathcal{A}_N^M(I)$ be such that $I\subseteq\fp$. Then, there exist $a\in\{d_1,\ldots,d_c\}$ and $b\in\mathbb{Z}$ such that $v_{\fp}(M/I^nN) = an+b$ for all $n\gg 0$. Moreover, if $\mathcal{B}_{IN}^M(I) = \mathcal{A}_N^M(I)$, then
%         $v_{\fp}(I^nM/I^{n+1}N) = v_{\fp}(M/I^{n+1}N) \mbox{ for all } n\gg 0.$
% 		\item Let $\mathcal{A}_N^M(I)\neq \emptyset$, and $I^{n_0}M\subseteq N$ for some $n_0$ $($e.g., if $N=M$, or if $N=\mathfrak{a}M$ for some homogeneous ideal $\mathfrak{a}$ satisfying $I \subseteq \sqrt{\mathfrak{a}}$$)$. Then, the functions
%         \[
%             \indeg\big( I^nM/I^{n+1}N \big), \;\; v\big( I^nM/I^{n+1}N \big) \; \mbox{ and } \; v\big( M/I^{n+1}N \big)
%         \]
%         all are eventually linear in $n$ with the same leading coefficient $d_{\gamma} \in \{d_1,\ldots,d_c\}$. In fact, the last two functions are asymptotically same when $\mathcal{B}_{IN}^M(I) = \mathcal{A}_N^M(I)$.
%         \item
%         When $(0 :_M y_1) = 0$ and $d_1\ge 1$, the leading coefficient in {\rm (2)} is $d_{\gamma} = d_1$.
%         % Additionally, if $(0 :_M y_1) = 0$, then $\gamma =  1$, i.e., the leading coefficient is $d_1$.
% 	\end{enumerate}
% \end{customtheorem}

\begin{theorem}[See Theorem~\ref{thm:Conca-gen} for more details]\label{thm:intro-Conca-gen}
With {\rm Setup~\ref{setup}} and {\rm Notation~\ref{notation}}, let $(0:_M I)=0$.
\begin{enumerate}[\rm (1)]
	\item Let $\fp\in\mathcal{A}_N^M(I)$ be such that $I\subseteq\fp$. Then, there exist $a\in\{d_1,\ldots,d_c\}$ and $b\in\mathbb{Z}$ such that $v_{\fp}(M/I^nN) = an+b$ for all $n\gg 0$. Moreover, if $\mathcal{B}_{IN}^M(I) = \mathcal{A}_N^M(I)$, then $v_{\fp}(I^nM/I^{n+1}N) = v_{\fp}(M/I^{n+1}N)$ for all $n \gg 0$.
	\item Let $\mathcal{A}_N^M(I)\neq \emptyset$, and $I^{n_0}M\subseteq N$ for some $n_0$ $($e.g., $N=M$, or $N=\mathfrak{a}M$ for some homogeneous ideal $\mathfrak{a}$ satisfying $I \subseteq \sqrt{\mathfrak{a}}$$)$. Then, the functions
    \[
        \indeg\big( I^nM/I^{n+1}N \big), \;\; v\big( I^nM/I^{n+1}N \big) \; \mbox{ and } \; v\big( M/I^{n+1}N \big)
    \]
    all are eventually linear in $n$ with the same leading coefficient $d_{\gamma} \in \{d_1,\ldots,d_c\}$. In fact, the last two functions are asymptotically same when $\mathcal{B}_{IN}^M(I) = \mathcal{A}_N^M(I)$.
    \item
    When $(0 :_M y_1) = 0$ and $d_1\ge 1$, the leading coefficient in {\rm (2)} is $d_{\gamma} = d_1$.
    % Additionally, if $(0 :_M y_1) = 0$, then $\gamma =  1$, i.e., the leading coefficient is $d_1$.
\end{enumerate}
\end{theorem}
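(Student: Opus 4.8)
The plan is to transfer the problem from $M/I^nN$ to its submodule $I^{n-1}M/I^nN$ and then run the graded Rees-algebra machinery behind Theorem~\ref{thm:main}. The key preliminary is a module Ratliff--Rush statement: since $(0:_M I)=0$ forces $I$ to contain an $M$-regular element (graded prime avoidance), the increasing chain $\{(I^{n+k}M:_M I^k)\}_{k\ge0}$ has a stable value $\widetilde{I^nM}\supseteq I^nM$, and $\widetilde{I^nM}=I^nM$ for $n\gg0$ (a module version of Ratliff--Rush stability, via an Artin--Rees argument with the regular element); as $xI\subseteq I^{n+1}M$ forces $xI^k\subseteq I^{n+k}M$ for every $k$, this gives $(I^{n+1}M:_M I)=I^nM$ for $n\gg0$. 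Now fix $\fp\in\mathcal{A}_N^M(I)$ with $I\subseteq\fp$. If $x+I^{n+1}N\in M/I^{n+1}N$ is killed by $\fp$, then $Ix\subseteq\fp x\subseteq I^{n+1}N\subseteq I^{n+1}M$, so $x\in(I^{n+1}M:_M I)=I^nM$ once $n\gg0$; hence $\ann_{M/I^{n+1}N}(\fp)=\ann_{I^nM/I^{n+1}N}(\fp)=:A_n$. Since $A_n\subseteq I^nM/I^{n+1}N$ and the $V$-torsion $\Gamma_V$ of a submodule is the ambient $\Gamma_V$ intersected with it, Lemma~\ref{lem:v-num-indeg} — taken with $V$ the product of the primes of $\mathcal{A}_N^M(I)$ strictly containing $\fp$, which is $n$-independent by Katz--West — gives
\[
  v_\fp(M/I^{n+1}N)=\indeg\big(A_n/(A_n\cap\Gamma_V(I^nM/I^{n+1}N))\big)\qquad(n\gg0),
\]
and this quotient is nonzero for $n\gg0$ because $\fp$ stays associated.

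To finish~(1), write $I^nM/I^{n+1}N=I^nM/I^n(IN)$; then $\bigoplus_{n\ge n_1}I^nM/I^n(IN)$ is a finitely generated graded module over the Rees algebra $\mathcal{R}(J)=R[y_1t,\dots,y_ct]$, since $J$ reduces $I$ and hence $\mathcal{R}(I)$ is module-finite over $\mathcal{R}(J)$. Consequently the $\fp$-torsion $\bigoplus_n A_n$, the graded submodule $\bigoplus_n\big(A_n\cap\Gamma_V(I^nM/I^{n+1}N)\big)$ — whose $n$th piece is exactly $A_n\cap\Gamma_V(I^nM/I^{n+1}N)$, the conditions coming from the higher powers of $y_1t,\dots,y_ct$ being automatic because $J\subseteq I$ — and hence the quotient $\bigoplus_n Q_n$ with $Q_n:=A_n/(A_n\cap\Gamma_V(I^nM/I^{n+1}N))$, are all finitely generated graded $\mathcal{R}(J)$-modules. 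By Theorem~\ref{thm:main}, applied to $\bigoplus_n Q_n$, the function $n\mapsto\indeg(Q_n)=v_\fp(M/I^{n+1}N)$ is eventually linear with leading coefficient in $\{d_1,\dots,d_c\}$, i.e. $v_\fp(M/I^nN)=an+b$ for $n\gg0$ with $a\in\{d_1,\dots,d_c\}$. For the ``moreover'' clause, $\mathcal{B}_{IN}^M(I)=\mathcal{A}_N^M(I)$ makes $\fp\in\Ass_R(I^nM/I^{n+1}N)$ for $n\gg0$, with the same primes strictly above $\fp$ as in $M/I^{n+1}N$; applying Lemma~\ref{lem:v-num-indeg} directly to $I^nM/I^{n+1}N$ then identifies $v_\fp(I^nM/I^{n+1}N)$ with the same $\indeg(Q_n)=v_\fp(M/I^{n+1}N)$.

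For~(2): the hypothesis $I^{n_0}M\subseteq N$ gives $I^{n+1+n_0}\bar x\subseteq I^{n+1}\,I^{n_0}M\subseteq I^{n+1}N$ for every $\bar x\in M/I^{n+1}N$, so $I\subseteq\sqrt{(0:_R\bar x)}$; thus every $\fp\in\mathcal{A}_N^M(I)$ contains $I$ and~(1) applies to all of them. Hence $v(M/I^{n+1}N)=\min_{\fp\in\mathcal{A}_N^M(I)}v_\fp(M/I^{n+1}N)$, a minimum of finitely many eventually linear functions, is eventually linear. By Theorem~\ref{thm:intro-lin-InM/InN} applied with $IN$ in place of $N$, both $\indeg(I^nM/I^{n+1}N)$ and $v(I^nM/I^{n+1}N)$ are eventually linear with a common leading coefficient $d_\delta\in\{d_1,\dots,d_c\}$. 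Since $v_\fp(M/I^{n+1}N)=\indeg(Q_n)\ge\indeg(I^nM/I^{n+1}N)$ for each of these $\fp$, while $v(M/I^{n+1}N)\le v(I^nM/I^{n+1}N)$ by Proposition~\ref{prop:L-M-sub} (as $I^nM/I^{n+1}N$ is a submodule of $M/I^{n+1}N$), the linear function $v(M/I^{n+1}N)$ is trapped between two functions of slope $d_\delta$, so its leading coefficient is $d_\gamma=d_\delta$. Finally, if $\mathcal{B}_{IN}^M(I)=\mathcal{A}_N^M(I)$, the ``moreover'' clause of~(1) gives $v_\fp(M/I^{n+1}N)=v_\fp(I^nM/I^{n+1}N)$ for each $\fp$, hence $v(M/I^{n+1}N)=v(I^nM/I^{n+1}N)$ for $n\gg0$.

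For~(3), assume $(0:_M y_1)=0$, so $y_1$ is a nonzerodivisor on $M$, and $d_1\ge1$. For $n\gg0$ one has $I^{n+1}M=\sum_i y_i I^nM$ and $I^{n+1}N=\sum_i y_i I^nN$ ($J$ reducing $I$), and for any nonzero graded submodule $L\subseteq M$ the regularity of $y_1$ yields $\indeg(y_1L)=d_1+\indeg(L)\le\indeg(y_iL)$; hence $\indeg(I^{n+1}M)=d_1+\indeg(I^nM)$ and $\indeg(I^{n+1}N)=d_1+\indeg(I^nN)$, so $\indeg(I^nM)$ and $\indeg(I^nN)$ are eventually linear with leading coefficient exactly $d_1$. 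Because $I^{n+1}N\subseteq I^{n+1}M$ and $d_1\ge1$, $\indeg(I^{n+1}N)\ge\indeg(I^{n+1}M)=\indeg(I^nM)+d_1>\indeg(I^nM)$; thus $I^{n+1}N$ vanishes in degree $\indeg(I^nM)$, the bottom graded piece of $I^nM$ survives in $I^nM/I^{n+1}N$, and $\indeg(I^nM/I^{n+1}N)=\indeg(I^nM)$ has leading coefficient $d_1$ for $n\gg0$. Comparing with the common leading coefficient of Theorem~\ref{thm:intro-lin-InM/InN} forces $d_\delta=d_1$, and then $d_\gamma=d_1$ by~(2). The only genuine obstacle is the first half of~(1) — controlling the $\fp$-torsion of $M/I^{n+1}N$ uniformly in $n$ via Ratliff--Rush stability, and verifying finite generation of $\bigoplus_n Q_n$ over $\mathcal{R}(J)$ together with the piece-by-piece identification of its $\Gamma_V$-part; once the question is phrased inside $I^nM/I^{n+1}N$, Theorem~\ref{thm:main} does the rest, and~(2)--(3) are bookkeeping with the displayed inequalities and an elementary initial-degree computation.
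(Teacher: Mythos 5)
Your proposal is correct and, for parts (1) and (2), follows essentially the same route as the paper: you reduce $v_\fp(M/I^{n+1}N)$ to a computation inside the submodule $I^nM/I^{n+1}N$ via the stable colon identity $(I^{n+1}N:_M\fp)\subseteq(I^{n+1}M:_MI)=I^nM$, identify $v_\fp$ with $\indeg(Q_n)$ via Lemma~\ref{lem:v-num-indeg}, assemble the $Q_n$ into a finitely generated graded module over $\mathscr{R}(J)$, and invoke Theorem~\ref{thm:main}; the sandwich $\indeg(I^nM/I^{n+1}N)\le v(M/I^{n+1}N)\le v(I^nM/I^{n+1}N)$ then pins down the slope in (2), exactly as in the paper's Lemma~\ref{lem:colon-comparison} and Theorem~\ref{thm:Conca-gen}. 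Two local differences are worth recording. First, the paper obtains $(I^{n+1}M:_MI)=I^nM$ for $n\gg0$ by citing Brodmann, whereas you re-derive it through a module Ratliff--Rush/Artin--Rees argument; this is a legitimate alternative, but it is asserted rather than proved and is precisely the content of the cited lemma, so it buys nothing beyond self-containedness. Second, your part (3) genuinely differs and is arguably cleaner: instead of the paper's proof by contradiction that $y_1\notin\sqrt{\ann_{\mathscr{R}(J)}(\mathscr{G})}$, you compute directly that $\indeg(I^{n+1}M)=d_1+\indeg(I^nM)$ for $n\gg0$ (from $JI^n=I^{n+1}$ and the $M$-regularity of $y_1$) and observe that, since $d_1\ge 1$, the bottom graded piece of $I^nM$ survives in $I^nM/I^{n+1}N$, so $\indeg(I^nM/I^{n+1}N)$ has slope exactly $d_1$; matching this against the common leading coefficient from Theorem~\ref{thm:linearity-InM/InN} forces $d_\gamma=d_1$. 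Both versions of (3) ultimately rest on the same two inputs (the reduction property of $J$ and $(0:_My_1)=0$), and your degree count is a valid shortcut.
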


Unlike \cite[Thm.~1.1]{Conca}, Theorem~\ref{thm:Conca-gen}.(2) describes the leading coefficients explicitly. Also it is quite surprising that the leading coefficients of the three linear functions in Theorem~\ref{thm:Conca-gen}.(2) are equal. Furthermore, whenever the modules $I^nM/I^{n+1}N$ and $M/I^{n+1}N$ have the same associate primes, their v-numbers are also same for every $n\gg 1$. In fact, these two numbers coincide for every $n\ge 1$ if in addition $(I^{n+1}N:_M I)=I^nM$ for all $n\ge 1$, see Remark~\ref{rmk:equality}.
Finally, in Section~\ref{sec:examples}, we construct some examples which complement our results. Among these, Example~\ref{example-1} particularly ensures that the hypothesis $(0:_M I)=0$ in Theorem~\ref{thm:Conca-gen} cannot be removed. Example~\ref{example-3} shows that the leading coefficient of the linear function $v(M/I^nM)$ is not necessarily same as $\indeg(I)$ even when $(0:_M I)=0$.
%By choosing suitable $M$ and $N$ in Theorem~\ref{thm:Conca-gen}, one produces various results on asymptotic linearity of v-numbers. One of these is given in the following corollary. For example $v(R/I^n+\fp)$ is eventually linear. 
%\begin{corollary}
%	With {\rm Setup~\ref{setup}}, let $R$ be a domain, and $\mathfrak{a}$ be a homogeneous ideal of $R$ satisfying $I \subseteq \sqrt{\mathfrak{a}}$. Then for each $\fp\in\mathcal{A}_N^M(I)$, there exist $a\in\{d_1,\ldots,d_c\}$ and $b\in\mathbb{Z}$ such that $v_{\fp}(R/I^n\mathfrak{a}) = an+b$ for all $n\gg 0$.
%\end{corollary}
\section{Main results and proofs}\label{sec:proofs}

In this section, we prove the results stated in the introduction. We start with the following, which is a generalization of the proof of  \cite[Lem.~1.2]{Conca}.

\begin{proof}[Proof of Lemma~\ref{lem:v-num-indeg}]
	Let $v = v_\fp(M)$ and $w = \indeg\big( \ann_M(\fp)/\ann_M(\fp)\cap\Gamma_V(M) \big)$. Then, by definition of $v_\fp(M)$, there exists a non-zero element $x \in M_v$ such that $\fp = (0 :_R x)$. Hence $\fp x=0$, i.e., $x \in \ann_M(\fp)$. We prove that $x\notin \ann_M(\fp)\cap\Gamma_V(M)$, equivalently, $x\notin \Gamma_V(M)$. If $V=R$, then $\Gamma_V(M)=0$, and there is nothing to prove. We may assume that $V\not=R$, i.e., $X_\fp\not=\emptyset$. If possible, assume that $x\in\Gamma_V(M)$. Then $V^mx=0$ for some integer $m\ge 1$. Thus $V^m \subseteq (0 :_R x) = \fp$, which implies that $V\subseteq\fp$. Hence, since $V = \prod_{\fq\in X_\fp}\fq$, it follows that $\fq\subseteq\fp$ for some $\fq\in X_\fp$. This is a contradiction because $\fp \subsetneq \fq$ (by the definition of $X_\fp$). So $x\notin \ann_M(\fp)\cap\Gamma_V(M)$. Thus the image of $x$ in $\ann_M(\fp)/\ann_M(\fp)\cap\Gamma_V(M)$ is a non-zero homogeneous element of degree $v$. Therefore $v\ge w$. It remains to prove the other inequality, i.e., $v\le w$.
	
	Suppose $y\in M_w$ induces a non-zero element of $\ann_M(\fp)/\ann_M(\fp)\cap\Gamma_V(M)$. Then $y\in\ann_M(\fp)\smallsetminus\Gamma_V(M)$. In particular, $\fp y=0$, i.e., $\fp\subseteq (0:_R y)$. If the equality holds, i.e., $\fp = (0:_R y)$, then $v\le w$. So it is enough to prove that $\fp = (0:_R y)$. If possible, assume that $\fp\subsetneq (0:_R y)$. Note that $(0:_R y)$ is a proper ideal, i.e., $\Ass_R(R/(0:_R y))\not=\emptyset$. Considering the map $R\to M$ given by $r\mapsto ry$, there is an injective $R$-module homomorphism $R/(0:_R y) \hookrightarrow M$. So $\Ass_R(R/(0:_R y)) \subseteq \Ass_R(M)$. Thus, for each $\fq\in\Ass_R(R/(0:_R y))$, one has that $\fp\subsetneq (0:_R y) \subseteq \fq \in \Ass_R(R/(0:_R y)) \subseteq \Ass_R(M)$, which yields that $\fq\in X_\fp$. Therefore $\Ass_R(R/(0:_R y)) \subseteq X_\fp$. In particular, $X_\fp\not=\emptyset$. Set $V_1 := \prod_{\fq\in \Ass_R(R/(0:_R y))}\fq$. Hence, from a primary decomposition of $(0:_R y)$, one deduces that there is an integer $m\ge 1$ such that $V_1^m \subseteq (0 :_R y)$, i.e., $V_1^m y = 0$. Since  $\Ass_R(R/(0:_R y)) \subseteq X_\fp$, it follows that $V\subseteq V_1$. So $V^m y \subseteq V_1^m y = 0$. Thus $y\in\Gamma_V(M)$, which is a contradiction. Therefore $\fp = (0:_R y)$.% This completes the proof.
\end{proof}

%Before proving Proposition~\ref{prop:comparison-v-num-reg}, we recall the notion of 
\begin{para}\label{para:reg}
	With Setup~\ref{setup}, further assume that $R$ is standard graded. Set $R_+:=\bigoplus_{n\ge 1}R_n$. The Castelnuovo-Mumford regularity of $M$ is given by
%	$\reg(M) := \max\{ \eend\big( H_{R_+}^i(M)\big) + i : 0\le i \le \dim(R) \}$,
	\[
		\reg(M) := \max\{ \eend\big( H_{R_+}^i(M)\big) + i : 0\le i \le \dim(R) \},
	\]
	where 
%	$R_+:=\bigoplus_{n\ge 1}R_n$, and 
	$H_{R_+}^i(M)$ denotes the $i$th local cohomology module of $M$ with respect to the ideal $R_+$. Note that $H_{R_+}^0(M) = \Gamma_{R_+}(M)$. It follows that $\eend\big( \Gamma_{R_+}(M) \big) \le \reg(M)$. Interested readers can look at \cite[Chapter 8]{BCRV22} and \cite{BCV22} for more details on this topic.
%	provided $\Gamma_{R_+}(M) \neq 0$. In the case, when $\fp\in\Ass_R(M)$ and $R_+\subseteq\fp$, then $\Gamma_{R_+}(M) \neq 0$. Indeed, if $R_+\subseteq\fp = (0:_Rx)$ for some non-zero $x\in M$, then $R_+x=0$, and hence $x\in\Gamma_{R_+}(M)$.
%	
%	let $R_0$ be a local ring with the maximal ideal $\fm$. Set $R_+:=\bigoplus_{n\ge 1}R_n$. Then $\fm\oplus R_+$ be the maximal homogeneous ideal of $R$. 	
%	
%	In addition, if we assume that $\depth(M)=0$, then $\reg(M) = \eend(\Gamma_{R_+}(M))$.
\end{para}

The v-number and regularity of a graded module can be compared as follows.

\begin{proposition}\label{prop:comparison-gen}
	With {\rm Setup~\ref{setup}}, assume that $R$ is standard graded. Denote $R_+:=\bigoplus_{n\ge 1}R_n$. If $\fp\in\Ass_R(M)$ for which $R_+\subseteq\fp$, then  $v_{\fp}(M)\le\reg(M)$. In particular, if $\depth(R_+,M)=0$, then $v(M)\le\reg(M)$.
%	\begin{enumerate}[\rm (1)]
%		\item If $\fp\in\Ass_R(M)$ be such that $R_+\subseteq\fp$, then  $v_{\fp}(M)\le\reg(M)$.
%		\item If $\depth(R_+,M)=0$, then $v(M)\le\reg(M)$.
%	\end{enumerate}
\end{proposition}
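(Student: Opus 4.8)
The plan is to read the bound directly off Lemma~\ref{lem:v-num-indeg}, combined with the inequality $\eend\big(\Gamma_{R_+}(M)\big)\le\reg(M)$ recorded in \ref{para:reg}. First I would apply Lemma~\ref{lem:v-num-indeg} to the given $\fp\in\Ass_R(M)$: with $V=R$ when $X_\fp=\emptyset$ and $V=\prod_{\fq\in X_\fp}\fq$ otherwise, it gives
\[
v_\fp(M)=\indeg(Q),\qquad Q:=\ann_M(\fp)\big/\big(\ann_M(\fp)\cap\Gamma_V(M)\big).
\]
Because $\fp\in\Ass_R(M)$, there is a homogeneous $x\in M$ with $(0:_Rx)=\fp$, so $v_\fp(M)\le\deg(x)<\infty$; hence $Q\neq 0$, and $\indeg(Q)\le\eend(Q)$ is a genuine inequality of integers.

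Next I would bound $\eend(Q)$. Since $Q$ is a graded quotient of $\ann_M(\fp)$, one has $\eend(Q)\le\eend\big(\ann_M(\fp)\big)$. The hypothesis $R_+\subseteq\fp$ forces every element of $\ann_M(\fp)=(0:_M\fp)$ to be annihilated by $R_+$, so $\ann_M(\fp)\subseteq(0:_M R_+)\subseteq\Gamma_{R_+}(M)$, whence $\eend\big(\ann_M(\fp)\big)\le\eend\big(\Gamma_{R_+}(M)\big)$. Stringing these inequalities together with \ref{para:reg} yields
\[
v_\fp(M)=\indeg(Q)\le\eend(Q)\le\eend\big(\ann_M(\fp)\big)\le\eend\big(\Gamma_{R_+}(M)\big)\le\reg(M),
\]
which is the first assertion.

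For the ``in particular'' clause I would argue as follows: if $\depth(R_+,M)=0$, then $M\neq 0$, so $\Ass_R(M)\neq\emptyset$, and $R_+$ consists of zerodivisors on $M$; by (homogeneous) prime avoidance applied to the finite set $\Ass_R(M)$ there is some $\fp\in\Ass_R(M)$ with $R_+\subseteq\fp$. The already-proved case gives $v_\fp(M)\le\reg(M)$, and combining with $v(M)\le v_\fp(M)$ gives $v(M)\le\reg(M)$. I do not expect a real obstacle: the whole argument is short once Lemma~\ref{lem:v-num-indeg} is available, and the only points needing care are checking $Q\neq 0$ (which is exactly where $\fp\in\Ass_R(M)$ is used, so that $\indeg(Q)\le\eend(Q)$ makes sense as an inequality of integers rather than $\infty\le-\infty$) and being consistent with the $\indeg/\eend$ conventions.
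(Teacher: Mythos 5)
Your proof is correct and rests on the same key facts as the paper's: the hypothesis $R_+\subseteq\fp$ places the relevant elements inside $\Gamma_{R_+}(M)$, whose end is bounded by $\reg(M)$, and the ``in particular'' clause follows by the identical prime-avoidance argument. The paper argues slightly more directly — it takes a witness $x\in M_{v_\fp(M)}$ with $(0:_Rx)=\fp$, observes $R_+x=0$ so $x\in\big(\Gamma_{R_+}(M)\big)_{v_\fp(M)}$, and concludes immediately — whereas your detour through Lemma~\ref{lem:v-num-indeg} and the chain $\indeg(Q)\le\eend(Q)\le\eend\big(\ann_M(\fp)\big)$ is valid but unnecessary.
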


\begin{proof}
%	Let $\fp\in\Ass_R(M)$ and $R_+\subseteq\fp$. 
	Let $\fp\in\Ass_R(M)$ and $v = v_{\fp}(M)$. Then there exists $x \in M_v$ such that $\fp = (0 :_R x)$. Since $R_+\subseteq\fp$, it follows that $R_+x=0$. Thus $ x \in \Gamma_{R_+}(M) \cap M_v = \big(\Gamma_{R_+}(M)\big)_v $, which implies that $ v \le \eend\big( \Gamma_{R_+}(M) \big) \le \reg(M) $. This proves the first part. If $\depth(R_+,M)=0$, then $R_+ \subseteq \bigcup_{\fq\in\Ass_R(M)}\fq$, which yields that $R_+\subseteq\mathfrak{r}$ for some $\mathfrak{r}\in\Ass_R(M)$ (by prime avoidance), and hence $v(M)\le v_{\mathfrak{r}}(M)\le\reg(M)$.
\end{proof}

% \old{As an immediate consequence of Proposition~\ref{prop:comparison-gen}, one obtains Proposition~\ref{prop:comparison-v-num-reg}.
% \begin{proof}[Proof of Proposition~\ref{prop:comparison-v-num-reg}]
% 	Note that $\depth(M) = \depth(\fm,M)$, where $\fm$ is the maximal homogeneous ideal of $R$. Since $R_+\subseteq \fm$, $\depth(R_+,M) \le \depth(M)$. So, from the given condition, it follows that $\depth(R_+,M) =0$. Hence, by Proposition~\ref{prop:comparison-gen}, $v(M)\le\reg(M)$.	
% %	By $\depth(M)$, we mean $\depth(\fm,M)$, where $\fm$ is the maximal homogeneous ideal of $R$.
% %	If $\fn$ is the maximal ideal of $R_0$, and $R_+=\bigoplus_{n\ge 1}R_n$, then $\fm := \fn\oplus R_+$ is the maximal homogeneous ideal of $R$. Since $\depth(M)=0$, it follows that $\fm\in\Ass_R(M)$. Hence, by Proposition~\ref{prop:comparison-gen}, $v(M)\le\reg(M)$.
% \end{proof}
% }

\begin{remark}\label{rmk:v-reg}
    With {\rm Setup~\ref{setup}}, suppose $R$ is standard graded, and $(R_0,\fn)$ is local. Denote $R_+:=\bigoplus_{n\ge 1}R_n$. Then $R$ has the maximal homogeneous ideal $\fm:=\fn\oplus R_+$. Set $\depth(M) := \depth(\fm,M)$. Clearly, $\depth(R_+,M) \le \depth(M)$. Hence, by Proposition~\ref{prop:comparison-gen}, if $\depth(M)=0$, then $v(M)\le\reg(M)$. In particular, $v(R/I)\le\reg(R/I)$ whenever $\depth(R/I)=0$.
\end{remark}

\begin{remark}\label{rmk:BM}
	When $R$ is a polynomial ring over a field, and $I$ is a proper monomial ideal satisfying $\dim(R/I)=0$, it is shown in \cite[Thm.~4.19]{BM} that $v(R/I)\le\reg(R/I)$.
%	As $\depth(R/I)\le\dim(R/I)$,
	Note that Proposition~\ref{prop:comparison-gen} highly strengthens \cite[Thm.~4.19]{BM}.
%	ensures that the condition $\dim(R/I)=0$ can be replaced by a weaker condition that $\depth(R/I)=0$.
\end{remark}

From now on, we go back to the general case (Setup~\ref{setup}). The v-number of a module is less than or equal to that of any of its submodule.
%If $L$ is a graded submodule of $M$, then $v(M)\le v(L)$. In fact, we have the following.

\begin{proposition}\label{prop:L-M-sub}
	Let $L$ be a graded submodule of $M$. Then
	\begin{enumerate}[\rm (1)]
		\item $v_\fp(M) \le v_\fp(L)$ for each $\fp\in\Ass_R(L)$.
		\item $v(M) \le v(L)$.
	\end{enumerate}
\end{proposition}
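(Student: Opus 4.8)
The plan is to prove (1) directly from the definitions, and then deduce (2) as an immediate consequence. For (1), fix $\fp\in\Ass_R(L)$. The first observation is that $\Ass_R(L)\subseteq\Ass_R(M)$, since $L$ is a submodule of $M$; in particular $\fp\in\Ass_R(M)$, so $v_\fp(M)$ is defined and there is nothing vacuous to worry about. Next, set $v=v_\fp(L)$; if $v=\infty$ there is nothing to prove, so assume it is finite. By Definition~\ref{defn:v-num-M} applied to the $R$-module $L$, there exists a homogeneous element $x\in L_v$ with $\fp=(0:_R x)$, where the annihilator is computed inside $L$. The key point is that, since $L$ is a submodule of $M$, the annihilator of $x$ does not change when we view $x$ as an element of $M$: indeed $(0:_R x)$ depends only on the cyclic submodule $Rx$, which is the same whether we regard $x$ as living in $L$ or in $M$. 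Hence $x$ is a homogeneous element of $M_v$ (using $L_v\subseteq M_v$) with $\fp=(0:_R x)$ computed in $M$, and so by definition $v_\fp(M)\le v=v_\fp(L)$.

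For (2), recall $v(M)=\inf\{v_\fq(M):\fq\in\Ass_R(M)\}$ and $v(L)=\inf\{v_\fp(L):\fp\in\Ass_R(L)\}$ by Definition~\ref{defn:v-num-M}. Using $\Ass_R(L)\subseteq\Ass_R(M)$ together with part (1), for every $\fp\in\Ass_R(L)$ we have $v(M)\le v_\fp(M)\le v_\fp(L)$; taking the infimum over $\fp\in\Ass_R(L)$ gives $v(M)\le v(L)$. One should also dispose of the edge case $L=0$: then $\Ass_R(L)=\emptyset$ and $v(L)=\infty$ by convention, so $v(M)\le v(L)$ holds trivially.

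I do not expect any real obstacle here; the statement is essentially a bookkeeping fact about annihilators being unaffected by enlarging the ambient module, plus the standard inclusion $\Ass_R(L)\subseteq\Ass_R(M)$ for submodules (see, e.g., \cite[Prop.~1.2.1]{BH93}). The only mild subtlety is to be careful that the infimum defining $v(M)$ ranges over a possibly larger index set than that of $v(L)$, which only helps the inequality, and to handle the conventional values for the zero module. Everything is homogeneous throughout, so degrees are tracked automatically.
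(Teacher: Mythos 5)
Your proposal is correct and follows essentially the same route as the paper: produce $x\in L_v$ with $\fp=(0:_Rx)$, observe that the annihilator and the degree are unchanged when $x$ is viewed in $M$, and then deduce (2) from (1) via $\Ass_R(L)\subseteq\Ass_R(M)$, treating $L=0$ by the convention $v(0)=\infty$. No issues.
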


\begin{proof}
	Let $\fp\in\Ass_R(L)$, and $v=v_\fp(L)$. Then there exists $x\in L_v$ such that $\fp = (0:_Rx)$. Since $x\in L_v \subseteq M_v$, one obtains that $\fp\in\Ass_R(M)$ and $v_\fp(M)\le v = v_\fp(L)$. It remains to prove the second part. Since $v(0)=\infty$, we may assume that $L\neq 0$, equivalently, $\Ass_R(L)$ is non-empty. Let $w=v(L)$. Then $w=v_\fq(L)$ for some $\fq\in\Ass_R(L)$. Since $\Ass_R(L)\subseteq\Ass_R(M)$, it follows that $\fq\in\Ass_R(M)$. By the first part, $v_\fq(M)\le v_\fq(L)=w$. Hence $v(M) \le v_\fq(M) \le w=v(L)$.
\end{proof}

\begin{para}\label{para:ME-West}
	In \cite[Prop.~2]{ME79}, McAdam-Eakin proved that if $\mathcal{R}$ is a Noetherian standard graded algebra over $R$, then $\Ass_R(\mathcal{R}_n)$ is eventually constant. This proof can be easily modified to give the result for a finitely generated graded $\mathcal{R}$-module $\mathcal{M}$, see \cite[Thm.~3.4]{We04}. Note that the Rees module $\mathscr{R}(I,M)$ is finitely generated graded over the Rees algebra $\mathscr{R}(I)$. Since $N$ is a submodule of $M$, the Rees module $\mathscr{R}(I,N)$ is a graded submodule of $\mathscr{R}(I,M)$. So the quotient $\mathscr{H} := \bigoplus_{n\ge 0}I^nM/I^nN$ is a finitely generated graded module over $\mathscr{R}(I)$. Thus, by \cite[Thm.~3.4]{We04}, it follows that the set $\Ass_R(I^nM/I^nN)$ is constant for all $n\gg 0$.
\end{para}

With Setup~\ref{setup}, we actually have a bigrading structure on $\mathscr{H} = \bigoplus_{n\ge 0}I^nM/I^nN$.

\begin{para}\label{para:Rees-ring-module}
	With Setup~\ref{setup}, let $\deg(x_i)=f_i$ for $1\le i\le d$. Let $\mathscr{R}(J) = \bigoplus_{n\in\mathbb{N}}J^n$ be the Rees algebra of $J$. We consider it as an $\mathbb{N}^2$-graded ring by setting the $(n,l)$th graded component of $\mathscr{R}(J)$ as the $l$th graded component of $J^n$ for each $(n,l)\in\mathbb{N}^2$. Then $\mathscr{R}(J)$ can be written as $ \mathscr{R}(J) = R_0[x_1,\ldots,x_d,y_1,\ldots,y_c] $, where $ \deg(x_i) = (0,f_i) $ for $ 1 \le i \le d $ and $ \deg(y_j) = (1,d_j) $ for $ 1 \le j \le c $. Consider the Rees module $\mathscr{R}(I,M) = \bigoplus_{n\in\mathbb{N}}I^nM $. By convention, $I^nM = 0$ whenever $n<0$. Setting $\mathscr{R}(I,M)_{(n,l)} :=(I^nM)_l$ for $(n,l)\in\mathbb{Z}^2$, we make $\mathscr{R}(I,M)$ a $\mathbb{Z}^2$-graded module over $\mathscr{R}(J)$. Since $J$ is a reduction ideal of $I$, the module $\mathscr{R}(I,M)$ is finitely generated over $\mathscr{R}(J)$. With similar gradation, $\mathscr{R}(I,N)$ is a $\mathbb{Z}^2$-graded $\mathscr{R}(J)$-submodule of $\mathscr{R}(I,M)$. So the quotient $\mathscr{H} := \mathscr{R}(I,M)/\mathscr{R}(I,N)$ is a finitely generated $\mathbb{Z}^2$-graded module over $\mathscr{R}(J)$.
\end{para}

We deduce Theorem~\ref{thm:linearity-InM/InN} from the following more general result.
%A counterpart of this theorem for regularity is shown in \cite{Gh16}.
%proposition, which is shown in \cite[Prop.~3.1]{CGN22}.

\begin{theorem}\label{thm:main}
    Let $ T = R_0 [x_1,\ldots,x_d,y_1,\ldots,y_c] $ be a bigraded ring over a commutative Noetherian ring $R_0$, where $ \deg(x_i) = (0,f_i) $ for $ 1 \le i \le d $ and $ \deg(y_j) = (1,d_j) $ for $ 1 \le j \le c $, where $d_1\le d_2\le \cdots\le d_c$. Let $ \mathscr{L} $ be a finitely generated $ \mathbb{Z}^2 $-graded $ T $-module. Set $\delta := \inf\left\{ j:y_j\notin\sqrt{\ann_T(\mathscr{L})}\right\}$. Set $ R := R_0[x_1,\ldots,x_d] $, where $ \deg(x_i) = f_i $ for $ 1 \le i \le d $. Denote $ \mathscr{L}_{(n,*)} := \bigoplus_{l\in\mathbb{Z}}  \mathscr{L}_{(n,l)}$ for each $n \in \mathbb{Z}$.
	
	Then, $ \mathscr{L}_{(n,*)} $ forms a $\mathbb{Z}$-graded $R$-module for each $n \in \mathbb{Z}$. Moreover, either $\mathscr{L}_{(n,*)} = 0$ for all $n\gg 0$, or $\mathscr{L}_{(n,*)} \neq 0$ for all $n\gg 0$. In the second case, $\delta$ is finite, and there exist $ a_{\fp} \in \{ d_j : \delta \le j \le c \} $ and $ b_1,b_2,b_{\fp} \in \mathbb{Z} $ such that
	\begin{enumerate}[\rm (1)]
		\item $\indeg\left(  \mathscr{L}_{(n,*)} \right) = d_{\delta} \cdot n + b_1 $ for all $ n \gg 0$,
		\item $v_{\fp}\left(  \mathscr{L}_{(n,*)} \right) = a_{\fp} \cdot n + b_{\fp}$ for all $ n \gg 0$ whenever $\fp\in\Ass_R\left(  \mathscr{L}_{(n,*)} \right)$ for all $n\gg 0$,
		\item $v\left(  \mathscr{L}_{(n,*)} \right) = d_{\delta} \cdot n + b_2 $ for all $ n \gg 0$.
	\end{enumerate}
%	\begin{equation*}
%		\indeg\left(  \mathscr{L}_{(n,*)} \right) = \indeg\big( \Tor_0^R( \mathscr{L}_{(n,*)},R_0) \big) = an + b \mbox{ for all } n \gg 0.
%	\end{equation*}
%	In $(1)$ and $(3)$, if $\mathscr{L}_{(n,*)} \neq 0$ for all $n\gg 0$, then both $b_1,b_2\in\mathbb{Z}$.
\end{theorem}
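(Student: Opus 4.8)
The plan is to reduce everything to the study of the bigraded module $\mathscr{L}$ by slicing it into its $n$th ``levels'' $\mathscr{L}_{(n,*)}$, and then to apply three ingredients in sequence: standard bigraded commutative algebra for the initial-degree statement (1), the initial-degree interpretation of the v-number from Lemma~\ref{lem:v-num-indeg} for statement (2), and a comparison of $\Ass$-sets across levels for statement (3).

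\medskip

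\noindent\textbf{Step 1: The $R$-module structure and the dichotomy.} First I would observe that since $\deg(y_j) = (1,d_j)$ raises the first coordinate and each $\deg(x_i) = (0,f_i)$ preserves it, the subring generated over $R_0$ by $x_1,\dots,x_d$ is exactly a copy of $R$ sitting in bidegree-$(0,*)$, and multiplication by $R$ preserves each level $\mathscr{L}_{(n,*)}$; hence each $\mathscr{L}_{(n,*)}$ is a $\mathbb{Z}$-graded $R$-module. Since $\mathscr{L}$ is a finitely generated $T$-module and $T$ is generated in first-degrees $0$ and $1$, the module $\bigoplus_{n \ge 0}\mathscr{L}_{(n,*)}$ is a finitely generated graded module over the standard-in-the-first-grading algebra $T' := R[y_1,\dots,y_c]$ (with $\deg y_j = 1$ in the relevant grading). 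By the usual Artin–Rees / Hilbert-function type argument, the vanishing locus $\{n : \mathscr{L}_{(n,*)} = 0\}$ is either cofinite or its complement is cofinite; this gives the dichotomy. In the non-vanishing case, $\delta$ is finite precisely because if every $y_j$ were nilpotent on $\mathscr{L}$ then $\mathscr{L}_{(n,*)} = 0$ for $n \gg 0$.

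\medskip

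\noindent\textbf{Step 2: Statement (1), the initial degree.} This is the technical heart. I would use that $\indeg(\mathscr{L}_{(n,*)})$ is governed by the generators of $\mathscr{L}$ as a $T$-module, pushed up by the $y_j$'s. The key point is that $y_\delta$ is the smallest-degree variable that is a nonzerodivisor modulo (the radical of) the annihilator, so multiplying by powers of $y_\delta$ produces nonzero elements whose second-degree grows like $d_\delta \cdot n$, giving $\indeg(\mathscr{L}_{(n,*)}) \le d_\delta n + (\text{const})$. For the reverse inequality one argues that elements of very negative second-degree in level $n$ cannot exist: any element of $\mathscr{L}_{(n,*)}$ is a $T$-combination of the finitely many homogeneous generators, and using $y_j$'s with $j < \delta$ (which are nilpotent on $\mathscr{L}$) can only be done boundedly many times, so the ``cheapest'' way to raise the first degree by $n$ forces at least $\asymp n$ applications of some $y_j$ with $j \ge \delta$, each costing $\ge d_\delta$ in the second degree. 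Making ``boundedly many times'' precise — essentially a filtration of $\mathscr{L}$ by the powers of $(y_1,\dots,y_{\delta-1})$, which stabilizes since those elements are nilpotent on $\mathscr{L}$ — is the main obstacle I anticipate, and I would handle it by an induction on the nilpotency index together with the base case $\delta = 1$, where $y_1$ is a nonzerodivisor mod $\sqrt{\ann}$ and the standard Rees-algebra argument (as in \cite[Thm.~3.2]{TW05}) applies directly.

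\medskip

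\noindent\textbf{Step 3: Statements (2) and (3), the v-numbers.} For (2), fix $\fp \in \Ass_R(\mathscr{L}_{(n,*)})$ for all $n \gg 0$ (such $\fp$ exist by the Brodmann/McAdam--Eakin--West stabilization recalled in \ref{para:ME-West}). Apply Lemma~\ref{lem:v-num-indeg} to $M := \mathscr{L}_{(n,*)}$: it identifies $v_\fp(\mathscr{L}_{(n,*)})$ with $\indeg$ of $\ann_{\mathscr{L}_{(n,*)}}(\fp)$ modulo its intersection with a $\Gamma_V$-torsion submodule, where $V$ depends only on $X_\fp \subseteq \Ass_R(\mathscr{L}_{(n,*)})$, hence only on $n$ through a set that is eventually constant. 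The point is that $\bigoplus_n \ann_{\mathscr{L}_{(n,*)}}(\fp)$ and the torsion submodule assemble into finitely generated graded $T'$-submodules/subquotients of $\bigoplus_n \mathscr{L}_{(n,*)}$ (because $\fp$ and $V$ are fixed for $n \gg 0$), so part (1) applied to that subquotient yields that $v_\fp(\mathscr{L}_{(n,*)})$ is eventually linear with leading coefficient of the form $d_j$; that this $j$ satisfies $j \ge \delta$ follows since any nonzero subquotient inherits that no $y_j$ with $j < \delta$ acts without nilpotence. Finally (3): $v(\mathscr{L}_{(n,*)}) = \min_{\fp} v_\fp(\mathscr{L}_{(n,*)})$ over the eventually-constant set $\mathcal{A} := \Ass_R(\mathscr{L}_{(n,*)})$ ($n \gg 0$); each term is eventually linear by (2), so the minimum is eventually linear, and its leading coefficient equals $\min_\fp a_\fp$. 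To pin this down to $d_\delta$, I would use Proposition~\ref{prop:L-M-sub}: since each $\ann_{\mathscr{L}_{(n,*)}}(\fp)$ is a submodule of $\mathscr{L}_{(n,*)}$, we get $v(\mathscr{L}_{(n,*)}) \le v_\fp$; conversely $v(\mathscr{L}_{(n,*)}) \ge \indeg(\mathscr{L}_{(n,*)}) = d_\delta n + b_1$ from (1) because a v-number is realized by a homogeneous element of $\mathscr{L}_{(n,*)}$, hence of degree $\ge \indeg$. Combining, $d_\delta n + b_1 \le v(\mathscr{L}_{(n,*)}) \le \min_\fp a_\fp \cdot n + (\text{const})$, which forces $\min_\fp a_\fp = d_\delta$ and gives (3).
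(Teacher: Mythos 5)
Your overall architecture (slice $\mathscr{L}$ into levels, use Lemma~\ref{lem:v-num-indeg} to convert $v_\fp$ into the initial degree of a subquotient $\ann_{\mathscr{L}}(\fp)/\ann_{\mathscr{L}}(\fp)\cap\Gamma_V(\mathscr{L})$ that is itself a finitely generated bigraded $T$-module, then feed that back into part (1)) is the same as the paper's, and parts (1) and (2) would go through essentially as you describe. One caution on (1): sandwiching $\indeg(\mathscr{L}_{(n,*)})$ between $d_\delta n + C_1$ and $d_\delta n + C_2$ does not by itself give eventual linearity. You need either the monotonicity $\indeg(\mathscr{L}_{(n+1,*)}) \ge \indeg(\mathscr{L}_{(n,*)}) + d_\delta$ for $n\gg0$ (which your ``each step costs at least $d_\delta$'' argument does supply once the nilpotent variables are disposed of, since then the sequence $\indeg(\mathscr{L}_{(n,*)})-d_\delta n$ is eventually non-decreasing and bounded above, hence eventually constant), or an external linearity result, which is what the paper does: it quotes \cite[Prop.~3.1]{CGN22} for linearity and only then identifies the slope as $d_\delta$.

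The genuine gap is in part (3). You assert that $d_\delta n + b_1 \le v(\mathscr{L}_{(n,*)}) \le \min_\fp a_\fp \cdot n + \mathrm{const}$ ``forces $\min_\fp a_\fp = d_\delta$.'' It does not: that chain of inequalities only forces $\min_\fp a_\fp \ge d_\delta$, which you already know from (2) since each $a_\fp \in \{d_j : \delta \le j \le c\}$. The hard direction is the reverse bound $v(\mathscr{L}_{(n,*)}) \le d_\delta n + O(1)$, and it is not automatic, because the minimal-degree element of $\mathscr{L}_{(n,*)}$ realizing $\indeg$ need not have prime annihilator, so $\indeg$ controls $v$ only from below. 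The paper closes this with a separate argument: the ascending chain $(0:_{\mathscr{L}} y_\delta^m)$ stabilizes at some $m_0$, with $\bigl(0:_{\mathscr{L}_{(n_1,*)}} y_\delta^{m_0}\bigr)$ a \emph{proper} submodule of $\mathscr{L}_{(n_1,*)}$ (else $y_\delta$ would be nilpotent on $\mathscr{L}$); multiplication by $y_\delta$ then induces graded injections of $\mathscr{L}_{(n-m-1,*)}/\bigl(0: y_\delta^{m+1}\bigr)$, shifted by $d_\delta$, into $\mathscr{L}_{(n-m,*)}/\bigl(0: y_\delta^{m}\bigr)$, and since the v-number of a module is at most that of any graded submodule (Proposition~\ref{prop:L-M-sub}), iterating from level $n$ down to level $n_1$ yields $v(\mathscr{L}_{(n,*)}) \le v\bigl(\mathscr{L}_{(n_1,*)}/(0:y_\delta^{m_0})\bigr) + (n-n_1)d_\delta$. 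Without some such device your proof of (3) does not pin the leading coefficient down to $d_\delta$.
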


\begin{proof}
	Note that $x_i\mathscr{L}_{(n,*)}\subseteq \mathscr{L}_{(n,*)}$ for each $n \in \mathbb{Z}$ and $ 1 \le i \le d $. Thus,
	restricting the scalars from $T$ to $R$,
%	with the restricted scalar multiplications, 
	the set $ \mathscr{L}_{(n,*)} $ forms a $\mathbb{Z}$-graded $R$-module. From the construction of $ \mathscr{L}_{(n,*)} $, the bigraded module $ \mathscr{L} $ can be written as $ \mathscr{L} = \bigoplus_{n\in\mathbb{Z}} \mathscr{L}_{(n,*)} $. Since $y_j \mathscr{L}_{(n,*)} \subseteq \mathscr{L}_{(n+1,*)}$, we may consider $ \mathscr{L} = \bigoplus_{n\in\mathbb{Z}} \mathscr{L}_{(n,*)} $ as a $\mathbb{Z}$-graded module over an $\mathbb{N}$-graded ring $T=R[y_1,\ldots,y_c]$, where $ \deg(y_j) = 1 $ for $ 1 \le j \le c $, and $R$ is the $0$th graded component of $T$ in this gradation. Since we are only changing the grading (from bigraded to graded), $ \mathscr{L} = \bigoplus_{n\in\mathbb{Z}} \mathscr{L}_{(n,*)} $ is also finitely generated as a graded module over $T=R[y_1,\ldots,y_c]$. Consequently, by \cite[Thm.~3.4]{We04}, there exists $n_0$ such that $\Ass_R\left(  \mathscr{L}_{(n,*)} \right) = \Ass_R\left(  \mathscr{L}_{(n_0,*)} \right)$ for all $n\ge n_0$. Denote $\mathcal{A}_{\mathscr{L}} := \Ass_R\left( \mathscr{L}_{(n_0,*)} \right)$. Clearly, if $\mathcal{A}_{\mathscr{L}}$ is an empty-set, then $\mathscr{L}_{(n,*)} = 0$ for all $n\ge n_0$. In the other case, $\mathcal{A}_{\mathscr{L}}\neq \emptyset$, and we have that $\mathscr{L}_{(n,*)} \neq 0$ for all $n\ge n_0$. Since $\mathscr{L}_{(n,*)}\not =0$ for $n\gg 0$, we must have that $(y_1,\ldots,y_c)\not\subseteq \sqrt{\ann_T(\mathscr{L})}$, and hence $\delta$ is a finite number.
	
    (1) Consider an $\mathbb{N}^2$-graded polynomial ring $ S = R_0 [X_1,\ldots,X_d,Y_1,\ldots,Y_c] $ over $R_0$, where $ \deg(X_i) = (0,f_i) $ for $ 1 \le i \le d $ and $ \deg(Y_j) = (1,d_j) $ for $ 1 \le j \le c $. There is a natural graded ring homomorphism $S\to T$. Via this homomorphism, $\mathscr{L}$ is a finitely generated $ \mathbb{Z}^2 $-graded $ S $-module as well. Hence, by \cite[Prop.~3.1]{CGN22}\footnote{Note that \cite[Prop.~3.1]{CGN22} is proved using a result \cite[Thm.~4.6]{BCH13} of Bagheri-Chardin-H\`{a}. As \cite[Thm.~4.6]{BCH13} holds with this grading, we have the output of \cite[Prop.~3.1]{CGN22} in this setup as well.}, there exist $ a_1\in \{ d_j : 1 \le j \le c \} $ and $ b_1 \in \mathbb{Z} $ such that
	\begin{equation}\label{eqn:indeg-lin}
		\indeg\left(  \mathscr{L}_{(n,*)} \right) = \indeg\big( \Tor_0^R( \mathscr{L}_{(n,*)},R_0) \big) = a_1n + b_1 \mbox{ for all } n \gg 0.
	\end{equation}
    We show that $a_1 = d_{\delta}$, where $\delta = \inf\left\{ j:y_j\notin\sqrt{\ann_T(\mathscr{L})}\right\}$. As a graded module over $T=R[y_1,\ldots,y_c]$, let
    \begin{equation}\label{gen-le-n1}
        \mbox{$ \mathscr{L} = \bigoplus_{n\in\mathbb{Z}} \mathscr{L}_{(n,*)} $ be generated by homogeneous elements of degree $\le n_1$.}
    \end{equation}
    For $1\le j < \delta$, since $y_j \in \sqrt{\ann_T(\mathscr{L})}$, there exist $k_j$ such that $y_j^{k_j} \in \ann_T(\mathscr{L})$. Set $n_2 := (\delta-1)\cdot\max\{k_j : 1\le j < \delta\}$. So $(y_1,\dots,y_{\delta-1})^k\mathscr{L}=0$ for all $k\ge n_2$. Thus, for every $n\ge n_1+n_2$, one has that
    \begin{align*}
        \mathscr{L}_{(n,*)} &= \bigoplus_{i\le n_1} (y_1,\dots,y_c)^{n-i}\mathscr{L}_{(i,*)}\\
        &=\bigoplus_{i\le n_1} \ \bigoplus_{k \le n-i} (y_1,\dots,y_{\delta-1})^k(y_\delta,\dots,y_c)^{n-i-k}\mathscr{L}_{(i,*)}\\
        &=\bigoplus_{i\le n_1} \ \bigoplus_{k\le n_2} (y_1,\dots,y_{\delta-1})^k(y_\delta,\dots,y_c)^{n-i-k}\mathscr{L}_{(i,*)}.
    \end{align*}
    Moreover, for every $n \ge n_1+n_2$, note that $y_{\delta}\mathscr{L}_{(n,*)}\neq 0$. Indeed, if $y_{\delta}\mathscr{L}_{(n,*)}= 0$ for some $n \ge n_1+n_2$, then $y_{\delta}^{n-i+1}\mathscr{L}_{(i,*)} \subseteq y_{\delta}\mathscr{L}_{(n,*)} =0$ for all $i\le n_1$, and hence $y_{\delta} \in \sqrt{\ann_T(\mathscr{L})}$, a contradiction. Thus one concludes that
    \begin{equation}\label{eqn:indeg-dif}
        \indeg\left(  \mathscr{L}_{(n+1,*)} \right) - \indeg\left(  \mathscr{L}_{(n,*)} \right) = \deg(y_{\delta}) = d_{\delta} \mbox{ for all } n \gg 0.
    \end{equation}
    Consequently, the statement (1) follows from \eqref{eqn:indeg-lin} and \eqref{eqn:indeg-dif}.
    
	(2) Let $\fp\in\Ass_R\left(  \mathscr{L}_{(n,*)} \right)$ for all $n\gg 0$, equivalently, $\fp\in\mathcal{A}_{\mathscr{L}}$. Set $X_\fp := \{\fq \in \mathcal{A}_{\mathscr{L}} : \fp \subsetneq \fq \}$. Let $V=R$ if $X_\fp=\emptyset$, otherwise $V = \prod_{\fq\in X_\fp}\fq$. Let $\mathscr{M} = \ann_{\mathscr{L}}(\fp)/\ann_{\mathscr{L}}(\fp)\cap\Gamma_V(\mathscr{L})$. Note that $y_j \in \sqrt{\ann_T(\mathscr{L})} \subseteq \sqrt{\ann_T(\mathscr{M})}$ for all $1\le j < \delta$. Since $ T $ is Noetherian, and $\mathscr{L}$ is finitely generated, the (sub)quotient $\mathscr{M}$ is also a finitely generated $\mathbb{Z}^2$-graded module over $T$, where the grading of $\mathscr{M}$ is induced by that of $\mathscr{L}$. In particular, $ \mathscr{M}_{(n,*)} = \bigoplus_{l\in\mathbb{Z}}  \mathscr{M}_{(n,l)}$ is same as $\ann_{\mathscr{L}_{(n,*)}}(\fp)/\ann_{\mathscr{L}_{(n,*)}}(\fp)\cap\Gamma_V({\mathscr{L}_{(n,*)}})$. Therefore, in view of Lemma~\ref{lem:v-num-indeg} and (1), one has that $v_{\fp}\left( \mathscr{L}_{(n,*)} \right) = \indeg(\mathscr{M}_{(n,*)}) = a_{\fp}n+b_{\fp}$ for all $n\gg 0$, and for some $ a_{\fp} \in \{ d_j : \delta \le j \le c \} $ and $ b_{\fp} \in \mathbb{Z} $.
	
	(3) Note that $\mathcal{A}_{\mathscr{L}}$ is a (non-empty) finite set. By the definition of v-numbers, $v\left( \mathscr{L}_{(n,*)} \right) = \inf\{ v_{\fp}\left( \mathscr{L}_{(n,*)} \right) : \fp\in\mathcal{A}_{\mathscr{L}} \}$ for all $n\ge n_0$. Hence from (2) and the observation made in \ref{para:inf-lin-func}, the function $v\left(  \mathscr{L}_{(n,*)} \right) $ is eventually linear, i.e., $v\left(  \mathscr{L}_{(n,*)} \right) = a \cdot n + b_2 $ for all $ n \gg 0$, and for some $a \in \{ d_j : \delta \le j \le c \} $ and $ b_2 \in \mathbb{Z} $. Clearly, $d_{\delta} \le a$. It is enough to show that $a\le d_{\delta}$.
    
    Since the module $\mathscr{L}$ is Noetherian, the chain of submodules
    $$(0:_{\mathscr{L}} y_{\delta}) \subseteq (0:_{\mathscr{L}} y_{\delta}^2) \subseteq (0:_{\mathscr{L}} y_{\delta}^3) \subseteq \cdots $$
    stabilizes. So there exists $m_0\ge 1$ such that $(0:_{\mathscr{L}} y_{\delta}^m) = (0:_{\mathscr{L}} y_{\delta}^{m_0})$ for all $m\ge m_0$. In particular, $(0:_{\mathscr{L}_{(n_1,*)}} y_{\delta}^m) = (0:_{\mathscr{L}_{(n_1,*)}} y_{\delta}^{m_0})$ for all $m\ge m_0$, where $n_1$ is as in \eqref{gen-le-n1}. We prove that $(0:_{\mathscr{L}_{(n_1,*)}} y_{\delta}^{m_0})$ is a proper $R$-submodule of $\mathscr{L}_{(n_1,*)}$. If possible, let $(0:_{\mathscr{L}_{(n_1,*)}} y_{\delta}^{m_0})=\mathscr{L}_{(n_1,*)}$. Then $y_{\delta}^{m_0}\mathscr{L}_{(n_1,*)}=0$. Denote $\indeg(\mathscr{L}):=\inf\{n:\mathscr{L}_{(n,*)}\neq 0\}$. Setting $l:=\max\{0,-\indeg(\mathscr{L})\}$, for all $\indeg(\mathscr{L})\le n\le n_1$, one has that $y_{\delta}^{m_0+n_1+l}\mathscr{L}_{(n,*)}\subseteq y_{\delta}^{m_0+n+l}\mathscr{L}_{(n_1,*)} = 0$ as $n+l\ge 0$. Therefore, since $\mathscr{L}$ is generated by homogeneous elements of degree $\le n_1$, one concludes that  $y_{\delta}^{m_0+n_1+l}\mathscr{L}_{(n,*)}=0$ for all $n\in\bz$, and hence $y_{\delta}\in\sqrt{\ann_T \mathscr{L}}$, a contradiction. Thus $(0:_{\mathscr{L}_{(n_1,*)}} y_{\delta}^{m_0}) \subsetneq \mathscr{L}_{(n_1,*)}$.
    
    We show that $v\left( \mathscr{L}_{(n,*)} \right)\le v\Big( \mathscr{L}_{(n_1,*)}/ \big(0:_{\mathscr{L}_{(n_1,*)}} y_\delta^{m_0}\big)\Big)+(n-n_1)\cdot d_\delta$ for all $n\ge n_1+m_0$. For every $n\ge m\ge 0$, the natural map $\mathscr{L}_{(n-m-1,*)}(-d_{\delta}) \stackrel{y_{\delta}}{\lra} \mathscr{L}_{(n-m,*)}/\big(0:_{\mathscr{L}_{(n-m,*)}} y_\delta^{m}\big)$ induces a graded injective $R$-module homomorphism
    \begin{equation}\label{inj-hom}
        \dfrac{\mathscr{L}_{(n-m-1,*)}}{\big(0:_{\mathscr{L}_{(n-m-1,*)}} y_\delta^{m+1}\big)} (-d_{\delta}) \stackrel{y_{\delta}}{\lra} \dfrac{\mathscr{L}_{(n-m,*)}}{\big(0:_{\mathscr{L}_{(n-m,*)}} y_\delta^{m}\big)}.
    \end{equation}
    Here $M(-h)$ denotes a graded $R$-module with $M_{n-h}$ as its $n$-graded component. So, by definition of v-numbers, $v(M(-h)) = v(M) + h$. Thus, by Proposition~\ref{prop:L-M-sub}.(2), for every $n\ge m\ge 0$, the map \eqref{inj-hom} yields that
    \begin{equation}
        v\left( \dfrac{\mathscr{L}_{(n-m,*)}}{\big(0:_{\mathscr{L}_{(n-m,*)}} y_\delta^{m}\big)} \right) \le v\left( \dfrac{\mathscr{L}_{(n-m-1,*)}}{\big(0:_{\mathscr{L}_{(n-m-1,*)}} y_\delta^{m+1}\big)} \right) + d_{\delta}.
    \end{equation}
    Using this inequality repeatedly starting from $m=0$, one obtains that
    \begin{align*}
        v\left( \mathscr{L}_{(n,*)} \right) & \le v\left( \dfrac{\mathscr{L}_{(n-1,*)}}{\big(0:_{\mathscr{L}_{(n-1,*)}} y_\delta^{1}\big)} \right) + d_{\delta} \le v\left( \dfrac{\mathscr{L}_{(n-2,*)}}{\big(0:_{\mathscr{L}_{(n-2,*)}} y_\delta^{2}\big)} \right) + 2 \cdot d_{\delta} \le \cdots \\
        & \le v\left( \dfrac{\mathscr{L}_{(n_1,*)}}{\big(0:_{\mathscr{L}_{(n_1,*)}} y_\delta^{n-n_1}\big)} \right) + (n-n_1)\cdot d_{\delta} \\
        & = n \cdot d_{\delta} + v\Big( \mathscr{L}_{(n_1,*)}/ \big(0:_{\mathscr{L}_{(n_1,*)}} y_\delta^{m_0}\big)\Big) - n_1 \cdot d_{\delta} \;\mbox{ for all } n \ge n_1+m_0.
    \end{align*}
    It follows that $a\le d_{\delta}$, and hence $a= d_{\delta}$. This completes the proof.
\end{proof}

\begin{para}\label{para:inf-lin-func}
	Let $r$ be a positive integer. Let $a_i,b_i \in\mathbb{Z}$ for $i = 1,\ldots, r $. Define $f(n) = \inf\{ a_in+b_i : 1\le i\le r \}$ for all $n \in \mathbb{N}$. Then $f(n) = an+b$ for all $n\gg 0$, where $a := \inf\{ a_1,\ldots,a_r \}$ and $b:=\inf\{ b_i : a_i = a, 1\le i\le r \}$.
\end{para}

%Theorem~\ref{thm:linearity-InM/InN} follows from a more general result.
%
%\begin{theorem}
%Let $T$ and $R$ be as in Theorem~\ref{thm:main}. Let $ \mathscr{K} $ be a finitely generated $ \mathbb{Z}^2 $-graded $ T $-module. For each $n \in \mathbb{Z}$, denote $ \mathscr{K}_{(n,*)} := \bigoplus_{l\in\mathbb{Z}}  \mathscr{K}_{(n,l)}$.
%
%Then, for each $n \in \mathbb{Z}$, $ \mathscr{K}_{(n,*)} $ forms a $\mathbb{Z}$-graded $R$-module. Moreover, the set $ \Ass_R\left(\mathscr{K}_{(n,*)}\right) $ is eventually constant, say $\mathcal{A}_\mathscr{K}$. Furthermore, for each $\fp\in\mathcal{A}_\mathscr{K}$, $v_{\fp}\left(\mathscr{K}_{(n,*)}\right) = an+b$ for all $n\gg 0$, where $a\in\{d_1,\ldots,d_c\}$ and $b\in\mathbb{Z}$.
%\end{theorem}

Now we are in a position to prove Theorem~\ref{thm:linearity-InM/InN}. Here, in order to  describe the leading coefficient of $v(I^nM/I^nN)$, we need the following.

\begin{para}\label{para:delta}
    With {\rm Setup~\ref{setup}}, considering the (graded) module $\mathscr{H} := \mathscr{R}(I,M)/\mathscr{R}(I,N)$ over the Rees algebra $\mathscr{R}(J)$, set
    $\delta:=\inf\left\{j:y_j\notin\sqrt{\ann_{\mathscr{R}(J)}(\mathscr{H})}, 1 \le j \le c \right\}$.
\end{para}

\begin{theorem}\label{thm:linearity-InM/InN}
    With {\rm Setup~\ref{setup}} and {\rm Notation~\ref{notation}}, let $\mathcal{B}_N^M(I)$ be a non-empty set. Then, for every $\fp\in\mathcal{B}_N^M(I)$, there exist $a\in\{d_\delta,\ldots,d_c\}$ and $b\in\mathbb{Z}$ such that $v_{\fp}(I^nM/I^nN) = an+b$ for all $n\gg 0$, where $\delta$ is as in {\rm \ref{para:delta}}. Furthermore, both the functions $\indeg(I^nM/I^nN)$ and $v(I^nM/I^nN)$ are eventually linear in $n$ with the same leading coefficient $d_{\delta}\in \{d_1,\ldots,d_c\}$.
\end{theorem}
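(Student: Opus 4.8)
The plan is to deduce Theorem~\ref{thm:linearity-InM/InN} as a direct specialization of Theorem~\ref{thm:main}, using the bigraded Rees-module structure set up in \ref{para:Rees-ring-module}. Concretely, I would take $T := \mathscr{R}(J) = R_0[x_1,\dots,x_d,y_1,\dots,y_c]$ with the $\mathbb{N}^2$-grading $\deg(x_i)=(0,f_i)$ and $\deg(y_j)=(1,d_j)$, and $\mathscr{L} := \mathscr{H} = \mathscr{R}(I,M)/\mathscr{R}(I,N)$, which by \ref{para:Rees-ring-module} is a finitely generated $\mathbb{Z}^2$-graded $T$-module because $J$ is a reduction of $I$. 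The key identification is that, for each $n$, the $R$-module $\mathscr{L}_{(n,*)} = \bigoplus_{l}(\mathscr{H})_{(n,l)}$ is exactly $I^nM/I^nN$ with its induced $\mathbb{Z}$-grading. The invariant $\delta$ in the statement is literally the $\delta$ of Theorem~\ref{thm:main} applied to this $\mathscr{L}$, so \ref{para:delta} just names it.

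With this dictionary in place, the proof is short. First I would note that $\mathcal{B}_N^M(I)\neq\emptyset$ means $\Ass_R(I^nM/I^nN)\neq\emptyset$ for all $n\gg 0$ (using \ref{para:ME-West}), hence $\mathscr{L}_{(n,*)}\neq 0$ for all $n\gg 0$, so we are in the "second case" of Theorem~\ref{thm:main} and $\delta$ is finite. Then:
\begin{enumerate}[\rm (1)]
    \item Theorem~\ref{thm:main}.(2) gives, for each $\fp\in\mathcal{B}_N^M(I)$ (which by \ref{para:ME-West} equals $\Ass_R(\mathscr{L}_{(n,*)})$ for all $n\gg 0$), constants $a=a_\fp\in\{d_j:\delta\le j\le c\}=\{d_\delta,\dots,d_c\}$ and $b=b_\fp\in\mathbb{Z}$ with $v_\fp(I^nM/I^nN)=an+b$ for $n\gg 0$.
    \item Theorem~\ref{thm:main}.(1) gives $\indeg(I^nM/I^nN)=d_\delta\cdot n+b_1$ for $n\gg 0$.
    \item Theorem~\ref{thm:main}.(3) gives $v(I^nM/I^nN)=d_\delta\cdot n+b_2$ for $n\gg 0$.
\end{enumerate}
In particular the last two functions are eventually linear with the common leading coefficient $d_\delta$, and $d_\delta\in\{d_1,\dots,d_c\}$ since $\delta$ is one of the indices $1,\dots,c$. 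That is exactly the assertion.

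Most of the work has been front-loaded into Theorem~\ref{thm:main} and into \ref{para:Rees-ring-module}; the genuinely necessary verification here is the bookkeeping one: that restricting the $\mathbb{Z}^2$-grading on $\mathscr{H}$ to fixed first coordinate $n$ recovers $I^nM/I^nN$ as a graded $R$-module, and that the two notions of $\delta$ coincide. Both are immediate from the construction in \ref{para:Rees-ring-module}, where $\mathscr{R}(I,M)_{(n,l)}=(I^nM)_l$ by definition and $\ann_{\mathscr{R}(J)}(\mathscr{H})$ is computed in the very ring $T$ to which Theorem~\ref{thm:main} is applied.

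\begin{proof}
    With {\rm Setup~\ref{setup}}, consider the $\mathbb{N}^2$-graded ring $T := \mathscr{R}(J) = R_0[x_1,\ldots,x_d,y_1,\ldots,y_c]$ with $\deg(x_i)=(0,f_i)$ for $1\le i\le d$ and $\deg(y_j)=(1,d_j)$ for $1\le j\le c$, as in \ref{para:Rees-ring-module}. Let $\mathscr{L} := \mathscr{H} = \mathscr{R}(I,M)/\mathscr{R}(I,N)$, which by \ref{para:Rees-ring-module} is a finitely generated $\mathbb{Z}^2$-graded $T$-module (here we use that $J$ is a reduction ideal of $I$). By construction, for each $n\in\mathbb{Z}$ the $\mathbb{Z}$-graded $R$-module $\mathscr{L}_{(n,*)} = \bigoplus_{l\in\mathbb{Z}}\mathscr{L}_{(n,l)}$ is precisely $I^nM/I^nN$ with its induced grading, and the number $\delta$ defined in \ref{para:delta} is exactly the invariant $\delta := \inf\{j : y_j\notin\sqrt{\ann_T(\mathscr{L})}\}$ appearing in Theorem~\ref{thm:main}.

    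Since $\mathcal{B}_N^M(I)$ is non-empty, we have $\Ass_R(I^nM/I^nN)\neq\emptyset$ for all $n\gg 0$ (cf.~\ref{para:ME-West}), hence $\mathscr{L}_{(n,*)}\neq 0$ for all $n\gg 0$. Thus we are in the second case of Theorem~\ref{thm:main}, so $\delta$ is finite, and there exist $b_1,b_2\in\mathbb{Z}$ such that
    \begin{align*}
        \indeg\big( I^nM/I^nN \big) &= d_{\delta}\cdot n + b_1 \quad\text{for all } n\gg 0,\\
        v\big( I^nM/I^nN \big) &= d_{\delta}\cdot n + b_2 \quad\text{for all } n\gg 0,
    \end{align*}
    by Theorem~\ref{thm:main}.(1) and Theorem~\ref{thm:main}.(3) respectively. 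Moreover, by \ref{para:ME-West}, $\mathcal{B}_N^M(I) = \Ass_R(\mathscr{L}_{(n,*)})$ for all $n\gg 0$; so for each $\fp\in\mathcal{B}_N^M(I)$, Theorem~\ref{thm:main}.(2) yields $a\in\{d_j : \delta\le j\le c\} = \{d_{\delta},\ldots,d_c\}$ and $b\in\mathbb{Z}$ with $v_{\fp}(I^nM/I^nN) = an+b$ for all $n\gg 0$. Finally, $d_{\delta}\in\{d_1,\ldots,d_c\}$ since $\delta\in\{1,\ldots,c\}$. This proves all the assertions.
\end{proof}
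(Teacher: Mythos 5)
Your proof is correct and follows essentially the same route as the paper: both apply Theorem~\ref{thm:main} to $\mathscr{L}=\mathscr{H}=\mathscr{R}(I,M)/\mathscr{R}(I,N)$ over $T=\mathscr{R}(J)$ and identify $\mathscr{L}_{(n,*)}$ with $I^nM/I^nN$ and $\delta$ with the invariant of \ref{para:delta}. Your write-up is, if anything, slightly more explicit about why $\mathscr{L}_{(n,*)}\neq 0$ for $n\gg 0$ and why the two notions of $\delta$ coincide.
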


\begin{proof}
% [Proof of Theorem~\ref{thm:linearity-InM/InN}]
	With the discussion made in \ref{para:Rees-ring-module}, $\mathscr{H} := \mathscr{R}(I,M)/\mathscr{R}(I,N)$ is a finitely generated $\mathbb{Z}^2$-graded module over $ \mathscr{R}(J) = R_0[x_1,\ldots,x_d,y_1,\ldots,y_c] $, where $ \deg(x_i) = (0,f_i) $ for $ 1 \le i \le d $ and $ \deg(y_j) = (1,d_j) $ for $ 1 \le j \le c $. Note that $ \mathscr{H}_{(n,*)} := \bigoplus_{l\in\mathbb{Z}}  \mathscr{H}_{(n,l)}$ is given by $\mathscr{R}(I,M)_{(n,*)}/\mathscr{R}(I,N)_{(n,*)}$, which is same as $I^nM/I^nN$. Therefore, in view of Theorem~\ref{thm:main}, one deduces that:
    \begin{enumerate}[(1)]
        \item $\indeg(I^nM/I^nN)$ is eventually linear in $n$ with the leading coefficient $d_\delta$;
        \item for every $\fp\in\mathcal{B}_N^M(I)$, the function $v_\fp(I^nM/I^nN)$ is eventually linear in $n$ with the leading coefficient inside the set $\{d_\delta,\dots,d_c\}$;
        \item $v(I^nM/I^nN)$ is eventually linear in $n$ with the leading coefficient $d_\delta$.
    \end{enumerate}
    This completes the proof of the theorem.
\end{proof}
% \new{
% \begin{remark}
%     Let $\mathcal{I} := \{I_n\}_{n\in\mathbb{N}}$ be an $I$-admissible filtration of homogeneous ideals $I_n$ of $R$. Then the associated Rees algebra $\mathcal{R}(\mathcal{I}) := \bigoplus_{n\in\mathbb{N}}I_n$ is a finitely generated module over the Rees algebra $\mathcal{R}(I)$.
%     % i.e., $\{I_n\}_{n\in\mathbb{N}}$ is a filtration such that there exists $m\in\mathbb{N}$ satisfying $I^n\subseteq I_n \subseteq I^{n-m}$ for all $n\ge m$.
%     Hence a similar proof as that of Theorem~\ref{thm:linearity-InM/InN} provides that the functions $v_{\fp}(I_nM/I_nN)$ and $v(I_nM/I_nN)$ are asymptotically linear in $n$, where $\fp\in\Ass_R(I_nM/I_nN)$ for all $n\gg 0$.
% \end{remark}
% }
% \begin{remark}\label{def::delta}
%     \new{In the settings of Theorem \ref{thm:linearity-InM/InN}, looking at the structure of $\mathscr{H}$, one can see that
%     $$\delta=\inf\left\{j:\forall s\in\mathbb{N}, \exists \Bar{n}\in\mathbb{N} \text{ such that } I^{\Bar{n}}M\not\subseteq (I^{\Bar{n}+s}N :_M y_j^s)\right\}.$$
%     In particular, $\delta\ge \inf\left\{j:y_j\notin\sqrt{\ann_R M}\right\}$.}
% \end{remark}

As a consequence of Theorem~\ref{thm:linearity-InM/InN}, we obtain a linear bound of $v(M/I^nN)$.

\begin{corollary}\label{cor:lower-bdd}
    With {\rm Setup~\ref{setup}} and {\rm Notation~\ref{notation}}, let $\fp\in\mathcal{B}_N^M(I)$. Then $\fp\in\mathcal{A}_N^M(I)$, and there exist $a\in\{d_\delta,\ldots,d_c\}$ and $b\in\mathbb{Z}$ such that $v_{\fp}(M/I^nN) \le an+b$ for all $n\gg 0$, where $\delta$ is as in {\rm \ref{para:delta}}. Furthermore, $v(M/I^nN) \le d_\delta n+e$ for all $n\ge 1$, and for some $e\in\mathbb{Z}$.
%	, where $a\in\{d_1,\ldots,d_c\}$ and $e\in\mathbb{Z}$.
\end{corollary}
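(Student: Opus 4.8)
The plan is to realize $I^nM/I^nN$ as a graded submodule of $M/I^nN$ and then feed the already-established linearity of Theorem~\ref{thm:linearity-InM/InN} through the monotonicity of v-numbers under submodules (Proposition~\ref{prop:L-M-sub}). Since $N\subseteq M$ yields $I^nN\subseteq I^nM\subseteq M$, one has $I^nM\cap I^nN=I^nN$, so for every $n\ge 1$ the canonical map induces an injective graded $R$-module homomorphism $I^nM/I^nN\hookrightarrow M/I^nN$. Hence $\Ass_R(I^nM/I^nN)\subseteq\Ass_R(M/I^nN)$, which for $n\gg 0$ reads $\mathcal{B}_N^M(I)\subseteq\mathcal{A}_N^M(I)$; in particular the given $\fp$ lies in $\mathcal{A}_N^M(I)$.

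First I would apply Proposition~\ref{prop:L-M-sub}.(1) with $L=I^nM/I^nN$ sitting inside $M/I^nN$: since $\fp\in\mathcal{B}_N^M(I)=\Ass_R(I^nM/I^nN)$ for $n\gg 0$, this gives $v_\fp(M/I^nN)\le v_\fp(I^nM/I^nN)$ for $n\gg 0$. Theorem~\ref{thm:linearity-InM/InN} supplies $a\in\{d_\delta,\dots,d_c\}$ and $b\in\mathbb{Z}$ with $v_\fp(I^nM/I^nN)=an+b$ for $n\gg 0$, and combining the two inequalities yields $v_\fp(M/I^nN)\le an+b$ for $n\gg 0$, which is the first assertion.

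For the final, uniform bound I would instead use Proposition~\ref{prop:L-M-sub}.(2), which gives $v(M/I^nN)\le v(I^nM/I^nN)$ for every $n\ge 1$. Since $\mathcal{B}_N^M(I)\ne\emptyset$, the module $I^nM/I^nN$ is non-zero for $n\gg 0$; because $I^{n}M=I^{n}N$ for some $n$ would propagate to all larger values of $n$, the module $I^nM/I^nN$ is in fact non-zero, hence $v(I^nM/I^nN)$ is a finite integer, for every $n\ge 1$. By Theorem~\ref{thm:linearity-InM/InN} there exist $n_0\ge 1$ and $b_2\in\mathbb{Z}$ with $v(I^nM/I^nN)=d_\delta n+b_2$ for all $n\ge n_0$. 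Setting
\[
  e:=\max\Big(\{b_2\}\cup\{\,v(I^nM/I^nN)-d_\delta n : 1\le n<n_0\,\}\Big),
\]
a finite integer, one gets $v(I^nM/I^nN)\le d_\delta n+e$ for all $n\ge 1$, whence $v(M/I^nN)\le d_\delta n+e$ for all $n\ge 1$.

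The only step that needs a little care is upgrading ``$n\gg 0$'' to ``all $n\ge 1$'' in the last statement, but this is not a genuine obstacle: it is settled by absorbing the finitely many initial values of the finite-valued function $v(I^nM/I^nN)$ into the constant $e$. Everything else is an immediate consequence of the submodule inequality of Proposition~\ref{prop:L-M-sub} applied to the linearity proved in Theorem~\ref{thm:linearity-InM/InN}.
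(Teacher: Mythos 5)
Your proposal is correct and follows essentially the same route as the paper: the inclusion $I^nM/I^nN\hookrightarrow M/I^nN$ plus Proposition~\ref{prop:L-M-sub} combined with Theorem~\ref{thm:linearity-InM/InN}, with the finitely many initial values absorbed into the constant $e$. The only (immaterial) difference is that you bound the early terms by $v(I^nM/I^nN)-d_\delta n$ and justify finiteness via $I^nM\neq I^nN$, whereas the paper uses $v(M/I^nN)-d_\delta n$ and justifies finiteness via $M\neq I^nN$; both checks are valid.
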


\begin{proof} 
% [Proof of Corollary~\ref{cor:lower-bdd}]
	Let $\fp\in\mathcal{B}_N^M(I)$. Since $I^nM/I^nN$ is a (graded) $R$-submodule of $M/I^nN$ for all $n\ge 0$, it follows that  $\mathcal{B}_N^M(I)\subseteq\mathcal{A}_N^M(I)$. So $\fp\in\mathcal{A}_N^M(I)$. In view of Theorem~\ref{thm:linearity-InM/InN}, there exist $a\in\{d_\delta,\ldots,d_c\}$ and $b\in\mathbb{Z}$
%	and $n_0\in\mathbb{N}$ 
    such that $v_{\fp}(I^nM/I^nN) = an+b$ for all $n\gg 0$. On the other  hand, by Proposition~\ref{prop:L-M-sub}.(1), $v_{\fp}(M/I^nN) \le v_{\fp}(I^nM/I^nN)$ for all $n\gg 0$. Combining these two results, $v_{\fp}(M/I^nN)\le an+b$ for all $n\gg 0$.
    % \old{Hence $v(M/I^nN)\le v_{\fp}(M/I^nN)\le an+b$ for all $n\gg 0$. So	there exists $n_0\in\mathbb{N}$ such that $v(M/I^nN)\le an+b$ for all $n\ge n_0$. Note that $M\neq IN$. Otherwise, if $M=IN$, then $M=I^nN$ for all $n\ge 1$, and hence $\mathcal{A}_N^M(I)$ is an empty set, a contradiction. So $M\neq IN$. Consequently, $M\neq I^nN$, and $v(M/I^nN)$ is finite for every $n\ge 1$. Considering $e$ as the maximum value among $b$ and $v(M/I^nN)$ for all $1\le n < n_0$, one gets that $v(M/I^nN) \le an+e$ for all $n\ge 1$.}
    % \old{On the other hand, by Proposition~\ref{prop:L-M-sub}.(2) and Theorem~\ref{thm:linearity-InM/InN}, for $n\gg 0$ we have $v(M/I^nN)\le v(I^nM/I^nN)=d_\delta n+e$ for some $e\in\mathbb{Z}$ and $\delta$ defined as in Remark \ref{def::delta}. Note that $M\neq IN$. Otherwise, if $M=IN$, then $M=I^nN$ for all $n\ge 1$, and hence $\mathcal{A}_N^M(I)$ is an empty set, a contradiction. So $M\neq IN$. Consequently, $M\neq I^nN$, and $v(M/I^nN)$ is finite for every $n\ge 1$.}

    For the second part, in view of Proposition~\ref{prop:L-M-sub}.(2) and Theorem~\ref{thm:linearity-InM/InN}, there exists $n_0$ such that $v(M/I^nN)\le v(I^nM/I^nN)=d_\delta n+b'$ for all $n\ge n_0$, and for some $b'\in\mathbb{Z}$. Note that $M\neq IN$. Otherwise, if $M=IN$, then $M=I^nN$ for all $n\ge 1$, and hence $\mathcal{A}_N^M(I)$ is an empty set, a contradiction. So $M\neq IN$. Consequently, $M\neq I^nN$, and $v(M/I^nN)$ is finite for every $n\ge 1$. Set $e$ as the maximum value among $b'$ and $\big(v(M/I^nN)-d_\delta n\big)$, $1\le n < n_0$. It follows that $v(M/I^nN) \le d_\delta n+e$ for all $n\ge 1$.
\end{proof}

%Modifying the proof of \cite[Thm.~1.1]{Conca}, we produce the following.
In the proof of Theorem~\ref{thm:Conca-gen}, we use the following lemma.

\begin{lemma}\label{lem:colon-comparison}
	With {\rm Setup~\ref{setup}}, let $(0:_M I)=0$. Let $\mathfrak{u}$ and $\mathfrak{a}$ be homogeneous ideals of $R$ such that $I\subseteq \mathfrak{u}$. Then, for all $n\gg 0$,
	\[
		\dfrac{\ann_{M/I^{n+1}N}(\mathfrak{u})}{\ann_{M/I^{n+1}N}(\mathfrak{u})\cap\Gamma_{\mathfrak{a}}(M/I^{n+1}N)} = \dfrac{\ann_{I^nM/I^{n+1}N}(\mathfrak{u})}{\ann_{I^nM/I^{n+1}N}(\mathfrak{u})\cap\Gamma_{\mathfrak{a}}(I^nM/I^{n+1}N)}.
	\]
%	as $\mathbb{Z}$-graded modules over $R$.
\end{lemma}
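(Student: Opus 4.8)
The plan is to prove the \emph{a priori} stronger statement that the numerators already coincide, namely that $\ann_{M/I^{n+1}N}(\mathfrak u) = \ann_{I^nM/I^{n+1}N}(\mathfrak u)$ for all $n \gg 0$; the displayed identity then follows formally, because $\ann_{I^nM/I^{n+1}N}(\mathfrak u)$ is a submodule of $I^nM/I^{n+1}N$ and, for any submodule $Y$ of a module $X$, one has $\Gamma_{\mathfrak a}(Y) = Y \cap \Gamma_{\mathfrak a}(X)$, so both $\Gamma_{\mathfrak a}$-intersections reduce to $\ann_{M/I^{n+1}N}(\mathfrak u)\cap\Gamma_{\mathfrak a}(M/I^{n+1}N)$. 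Since $\ann_{M/I^{n+1}N}(\mathfrak u) = (I^{n+1}N :_M \mathfrak u)/I^{n+1}N$ and $\ann_{I^nM/I^{n+1}N}(\mathfrak u) = \big((I^{n+1}N :_M \mathfrak u)\cap I^nM\big)/I^{n+1}N$, the equality of numerators amounts to the inclusion $(I^{n+1}N :_M \mathfrak u) \subseteq I^nM$ for $n \gg 0$. As $I \subseteq \mathfrak u$ and $I^{n+1}N \subseteq I^{n+1}M$, we have $(I^{n+1}N :_M \mathfrak u) \subseteq (I^{n+1}N :_M I) \subseteq (I^{n+1}M :_M I)$, so everything comes down to the assertion $(I^{n+1}M :_M I) = I^nM$ for all $n \gg 0$.

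This last assertion is of Ratliff--Rush type, and the hypothesis $(0 :_M I) = 0$ is exactly what is needed. First, $(0 :_M I) = 0$ forces $(0 :_M I^k) = 0$ for every $k$ (if $I^k x = 0$ then $I^{k-1}x \subseteq (0:_M I) = 0$), i.e. $\Gamma_I(M) = 0$. I would then argue as follows. Put $\mathscr R := \mathscr R(I) = \bigoplus_{k\ge0} I^k$, let $\mathscr M := \mathscr R(I,M) = \bigoplus_{k\ge0} I^kM$ be the Rees module, and introduce the graded $\mathscr R$-module $M\langle I\rangle := \bigoplus_{k\ge0} M$ whose $k$-th component is $M$ and on which the degree-$1$ piece $I$ of $\mathscr R$ acts by honest multiplication $M \to M$. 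Then $\mathscr M$ is a graded $\mathscr R$-submodule of $M\langle I\rangle$, with quotient $\mathscr C := M\langle I\rangle/\mathscr M = \bigoplus_{k\ge0} M/I^kM$. A homogeneous element of $\mathscr M$ (or of $M\langle I\rangle$) killed by a power of $\mathscr R_+$ corresponds to an element of $\Gamma_I(M) = 0$, so $\Gamma_{\mathscr R_+}(\mathscr M) = 0 = \Gamma_{\mathscr R_+}(M\langle I\rangle)$. Feeding $0 \to \mathscr M \to M\langle I\rangle \to \mathscr C \to 0$ into the long exact sequence of local cohomology with support in $\mathscr R_+$ yields an injection $\Gamma_{\mathscr R_+}(\mathscr C) \hookrightarrow H^1_{\mathscr R_+}(\mathscr M)$. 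Since $\mathscr R$ is Noetherian and generated over $\mathscr R_0 = R$ by its degree-$1$ part, and $\mathscr M$ is finitely generated, the graded module $H^1_{\mathscr R_+}(\mathscr M)$ vanishes in all sufficiently large degrees (a standard graded vanishing fact); hence $\big(\Gamma_{\mathscr R_+}(\mathscr C)\big)_{(n)} = 0$ for $n \gg 0$. Finally, if $x \in M$ satisfies $Ix \subseteq I^{n+1}M$, then the class $\bar x$ in $\mathscr C_{(n)} = M/I^nM$ is killed by $\mathscr R_1 = I$ in $\mathscr C_{(n+1)}$, and since $\mathscr R_k = \mathscr R_1^k$ this forces $\mathscr R_+\bar x = 0$, i.e. $\bar x \in \big(\Gamma_{\mathscr R_+}(\mathscr C)\big)_{(n)}$. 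Therefore $(I^{n+1}M :_M I)/I^nM = 0$ for $n \gg 0$, which completes the reduction.

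The step I expect to be the main obstacle is precisely this last one, $(I^{n+1}M :_M I) = I^nM$ for $n \gg 0$: one must ensure it really follows from $(0:_M I) = 0$ alone, without assuming that $I$ contains an $M$-regular element, since the base ring $R_0$ is an arbitrary Noetherian ring and homogeneous prime avoidance need not produce such an element. The local-cohomology argument above is how I would circumvent that; alternatively, one could invoke a module version of the Ratliff--Rush theorem, which gives $\widetilde{I^nM} = I^nM$ for $n \gg 0$ under $\Gamma_I(M)=0$, and then use $(I^{n+1}M :_M I) \subseteq \widetilde{I^nM}$. Everything else --- the reduction to this assertion via colon manipulations, and the passage from ``annihilators agree'' to the stated equality of $\Gamma_{\mathfrak a}$-quotients --- is routine bookkeeping.
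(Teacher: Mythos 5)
Your argument is correct and follows essentially the same route as the paper: both reduce the displayed equality to the single identity $\ann_{M/I^{n+1}N}(\mathfrak{u})=\ann_{I^nM/I^{n+1}N}(\mathfrak{u})$ for $n\gg 0$ (after which the $\Gamma_{\mathfrak{a}}$-terms match automatically because $\Gamma_{\mathfrak{a}}(Y)=Y\cap\Gamma_{\mathfrak{a}}(X)$ for a submodule $Y\subseteq X$), and both obtain that identity from the chain $(I^{n+1}N:_M\mathfrak{u})\subseteq(I^{n+1}M:_M I)=I^nM$ for $n\gg 0$. The only difference is that the paper cites the last equality from Brodmann's lemma, whereas you give a valid self-contained proof of it via $\Gamma_I(M)=0$ and the vanishing of $H^1_{\mathscr{R}_+}\big(\mathscr{R}(I,M)\big)$ in large degrees.
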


\begin{proof}
	Note that $(I^{n+1}N :_M \fu) \subseteq (I^{n+1}M :_M \fu) \subseteq (I^{n+1}M :_M I) = I^nM$ for all $n\gg 0$, where the last equality follows from \cite[Lem.~(4)]{Br79}. So
	\begin{equation*}
	(I^{n+1}N :_M \fu) = (I^{n+1}N :_M \fu) \cap I^nM  = (I^{n+1}N :_{I^nM} \fu) \mbox{ for all } n\gg 0.% \mbox{ and}
	\end{equation*}
	Going modulo $I^{n+1}N$ both sides, as graded submodules of $I^nM/I^{n+1}N$,
	%	 we have $\ann_{M/I^{n+1}N}(\mathfrak{u}) = \ann_{I^nM/I^{n+1}N}(\mathfrak{u})$ for all $n\gg 0$.
	%	 , and both are graded submodules of $I^nM/I^{n+1}N$.
	\begin{equation}\label{ann-same}
	\ann_{M/I^{n+1}N}(\mathfrak{u}) = \ann_{I^nM/I^{n+1}N}(\mathfrak{u}) \mbox{ for all } n\gg 0.% \mbox{ and}
	\end{equation}
	%	 Intersecting \eqref{ann-same} with 
	As $(I^nM/I^{n+1}N)\cap\Gamma_{\mathfrak{a}}(M/I^{n+1}N) = \Gamma_{\mathfrak{a}}(I^nM/I^{n+1}N)$, \eqref{ann-same} further induces that
	\begin{align}\label{ann-gamma-same}
	\ann_{M/I^{n+1}N}(\mathfrak{u})\cap\Gamma_{\mathfrak{a}}(M/I^{n+1}N) &=\ann_{I^nM/I^{n+1}N}(\mathfrak{u})\cap\Gamma_{\mathfrak{a}}(M/I^{n+1}N)\\
	&=\ann_{I^nM/I^{n+1}N}(\mathfrak{u})\cap\Gamma_{\mathfrak{a}}(I^nM/I^{n+1}N)\nonumber
	\end{align}
	for all $n\gg 0$. Combining \eqref{ann-same} and \eqref{ann-gamma-same}, one obtains the desired equalities.
\end{proof}

Now we give the following.

\begin{theorem}\label{thm:Conca-gen}
	With {\rm Setup~\ref{setup}} and {\rm Notation~\ref{notation}}, let $(0:_M I)=0$.
	\begin{enumerate}[\rm (1)]
		\item Let $\fp\in\mathcal{A}_N^M(I)$ be such that $I\subseteq\fp$. Then, there exist $a\in\{d_1,\ldots,d_c\}$ and $b\in\mathbb{Z}$ such that $v_{\fp}(M/I^nN) = an+b$ for all $n\gg 0$.
        Moreover, if $\mathcal{B}_{IN}^M(I) = \mathcal{A}_N^M(I)$, then
        $v_{\fp}(I^nM/I^{n+1}N) = v_{\fp}(M/I^{n+1}N)$ for all $n \gg 0$.
		\item Let $\mathcal{A}_N^M(I)\neq \emptyset$, and $I^{n_0}M\subseteq N$ for some $n_0$ $($e.g., $N=M$, or 
        $N=\mathfrak{a}M$ for some homogeneous ideal $\mathfrak{a}$ satisfying $I \subseteq \sqrt{\mathfrak{a}}$$)$. Then, the functions
        \[
            \indeg\big( I^nM/I^{n+1}N \big), \;\; v\big( I^nM/I^{n+1}N \big) \; \mbox{ and } \; v\big( M/I^{n+1}N \big)
        \]
        all are eventually linear in $n$ with the same leading coefficient $d_{\gamma} \in \{d_1,\ldots,d_c\}$, where $\gamma = \inf\left\{j:y_j\notin\sqrt{\ann_{\mathscr{R}(J)} (\mathscr{G})}, 1 \le j \le c \right\}$ and $\mathscr{G} := \mathscr{R}(I,M)/\mathscr{R}(I,IN)$. In addition, if $\mathcal{B}_{IN}^M(I) = \mathcal{A}_N^M(I)$, then
        $$v\big( I^nM/I^{n+1}N \big) = v\big( M/I^{n+1}N \big) \mbox{ for all } n \gg 0.$$
        \item When $(0 :_M y_1) = 0$ and $d_1\ge 1$, the leading coefficient in {\rm (2)} is $d_{\gamma} = d_1$.
	\end{enumerate}
\end{theorem}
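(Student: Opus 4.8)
My plan is to derive part (3) from the already-established part (2) together with Theorem~\ref{thm:main}, adding only a short degree count. By part (2) the function $\indeg(I^nM/I^{n+1}N)$ is eventually linear with leading coefficient $d_\gamma$, and since $d_1\le d_2\le\cdots\le d_c$ and $\gamma\ge 1$ we get $d_\gamma\ge d_1$ for free; so the whole point of (3) is the reverse inequality $d_\gamma\le d_1$. For that it suffices to produce, for every $n\gg 0$, a nonzero homogeneous element of $I^nM/I^{n+1}N$ of degree $d_1 n+O(1)$: comparing its degree with the linear formula $\indeg(I^nM/I^{n+1}N)=d_\gamma n+b_1$ then forces $d_\gamma\le d_1$.

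First I would clear away the trivial reductions. Since $\mathcal{A}_N^M(I)\ne\emptyset$ we have $M\ne 0$, and since $(0:_M y_1)=0$ multiplication by $y_1^n$ is injective on $M$, so $0\ne y_1^nM\subseteq J^nM\subseteq I^nM$ and in particular $I^nM\ne 0$ for all $n$; combined with $I^{n_0}M\subseteq N$ this forces $N\ne 0$. Moreover $(0:_N y_1)\subseteq(0:_M y_1)=0$, so translating to the Rees module, the element $y_1\in\mathscr{R}(J)$ (of bidegree $(1,d_1)$) acts as a nonzerodivisor on $\mathscr{R}(I,N)$: if $y_1w$ vanishes in $I^{n+1}N\subseteq N$ then $w=0$. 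A nonzerodivisor on a nonzero module is not nilpotent on it, so $y_1\notin\sqrt{\ann_{\mathscr{R}(J)}(\mathscr{R}(I,N))}$; the same regularity also shows $I^nN\supseteq y_1^nN\ne 0$ for all $n$. Hence Theorem~\ref{thm:main}, applied to $\mathscr{L}:=\mathscr{R}(I,N)$ (so $\mathscr{L}_{(n,*)}=I^nN$), runs in its nonvanishing case with the corresponding index equal to $\delta=1$, and yields $\indeg(I^nN)=d_1 n+\beta$ for all $n\gg 0$ and some $\beta\in\mathbb{Z}$. (Equivalently one may quote Theorem~\ref{thm:linearity-InM/InN} with $(M,N)$ replaced by $(N,0)$.)

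Now fix $n\gg 0$ and choose a nonzero homogeneous $w\in(I^nN)_{\indeg(I^nN)}$, so $\deg w=d_1 n+\beta$. On the one hand $w\in I^nN\subseteq I^nM$; on the other hand $w\notin I^{n+1}N$, because $\deg w=d_1 n+\beta<d_1(n+1)+\beta=\indeg(I^{n+1}N)$ — this is the one place $d_1\ge 1$ is needed — whereas every nonzero element of $I^{n+1}N$ has degree at least $\indeg(I^{n+1}N)$. Therefore the image of $w$ in $I^nM/I^{n+1}N$ is a nonzero homogeneous element of degree $d_1 n+\beta$, so $\indeg(I^nM/I^{n+1}N)\le d_1 n+\beta$ for all $n\gg 0$; comparing leading coefficients with part (2) gives $d_\gamma\le d_1$, and hence $d_\gamma=d_1$.

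I do not expect a genuine obstacle here; the single delicate point is pinning the leading coefficient of $\indeg(I^nN)$ down to \emph{exactly} $d_1$ rather than merely to some $d_j\ge d_1$. This is precisely what the extra hypothesis $(0:_M y_1)=0$ (strictly stronger than the standing $(0:_M I)=0$) provides: it makes $y_1$ a nonzerodivisor, hence non-nilpotent, on $\mathscr{R}(I,N)$, so the index $\delta$ for that Rees module is $1$. The hypothesis $d_1\ge 1$ then serves only to turn the resulting equality of leading coefficients into the strict inequality $\deg w<\indeg(I^{n+1}N)$ that places $w$ outside $I^{n+1}N$.
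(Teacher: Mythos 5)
Your proposal addresses only part (3), taking parts (1) and (2) as already established; within that scope the argument is correct, and it takes a genuinely different route from the paper's. The paper argues by contradiction: it assumes $y_1^s\mathscr{G}=0$, deduces $y_1^sI^{n_1}M\subseteq I^{n_1+s+1}N=J^{s+1}I^{n_1}N$ via the reduction equation $JI^{n_1}=I^{n_1+1}$, and then compares initial degrees ($sd_1+\indeg(I^{n_1}M)\ge (s+1)d_1+\indeg(I^{n_1}N)\ge (s+1)d_1+\indeg(I^{n_1}M)$) to contradict $d_1\ge 1$. You instead argue directly: since $(0:_My_1)=0$ makes $y_1$ a nonzerodivisor (hence non-nilpotent) on the nonzero Rees module $\mathscr{R}(I,N)$, Theorem~\ref{thm:main} applied to that module gives $\indeg(I^nN)=d_1n+\beta$ with slope exactly $d_1$; a minimal-degree element $w$ of $I^nN$ then survives in $I^nM/I^{n+1}N$ because $d_1\ge 1$ forces $\deg w<\indeg(I^{n+1}N)$, and comparing with the linear formula for $\indeg(I^nM/I^{n+1}N)$ from part (2) yields $d_\gamma\le d_1$. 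All the supporting reductions you make ($N\ne 0$, finite generation of $\mathscr{R}(I,N)$ over $\mathscr{R}(J)$, injectivity of $y_1$ on the Rees module) check out. What each approach buys: the paper's contrapositive argument is self-contained at the level of ideals and uses the reduction equation explicitly, whereas yours reuses the machinery of Theorem~\ref{thm:main} on a second Rees module and replaces the contradiction by an explicit witness of low degree in $I^nM/I^{n+1}N$; both hinge on the same two hypotheses in the same roles. To be a complete proof of the theorem you would still need to supply the arguments for (1) and (2), which in the paper rest on Lemma~\ref{lem:colon-comparison} and Theorem~\ref{thm:linearity-InM/InN}.
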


\begin{proof}
% [Proof of Theorem~\ref{thm:Conca-gen}]
	 (1) Considering $IN$ in place of $N$ in \ref{para:Rees-ring-module}, one has that $\mathscr{G} = \mathscr{R}(I,M)/\mathscr{R}(I,IN)$ is a finitely generated $\mathbb{Z}^2$-graded module over $\mathscr{R}(J)$, where $\mathscr{R}(J)$ is an $\mathbb{N}^2$-graded ring with the same gradation as in \ref{para:Rees-ring-module}. Using the prime ideal $\fp\in\mathcal{A}_N^M(I)$, set $X_\fp := \{\fq \in \mathcal{A}_N^M(I) : \fp \subsetneq \fq \}$. Let $V=R$ if $X_\fp=\emptyset$, otherwise $V = \prod_{\fq\in X_\fp}\fq$. Let $\mathscr{L} = \ann_{\mathscr{G}}(\fp)/\ann_{\mathscr{G}}(\fp)\cap\Gamma_V(\mathscr{G})$. Then $\mathscr{L}$ is also a finitely generated $\mathbb{Z}^2$-graded module over $\mathscr{R}(J)$, where the bigrading in $\mathscr{L}$ is induced by that of $\mathscr{G}$. So $ \mathscr{L}_{(n,*)} = \bigoplus_{l\in\mathbb{Z}}  \mathscr{L}_{(n,l)}$ is same as $\ann_{\mathscr{G}_{(n,*)}}(\fp)/\ann_{\mathscr{G}_{(n,*)}}(\fp)\cap\Gamma_V({\mathscr{G}_{(n,*)}})$, where $\mathscr{G}_{(n,*)} = I^nM/I^{n+1}N$. Hence, since $I\subseteq\fp$, in view of Lemma~\ref{lem:colon-comparison},
	 $$ \mathscr{L}_{(n,*)} = \dfrac{\ann_{M/I^{n+1}N}(\mathfrak{p})}{\ann_{M/I^{n+1}N}(\mathfrak{p})\cap\Gamma_{V}(M/I^{n+1}N)} \mbox{ for all } n \gg 0.$$
    Finally, by Lemma~\ref{lem:v-num-indeg} and Theorem~\ref{thm:main}.(1), there exists an integer $b_1$ such that $v_{\fp}(M/I^{n+1}N) = \indeg(\mathscr{L}_{(n,*)}) = d_{\delta_\fp} n + b_1 $ for all $n \gg 0$, where
    \begin{equation}\label{delta-p}
        \delta_\fp = \inf\left\{j:y_j\notin\sqrt{\ann_{\mathscr{R}(J)} (\mathscr{L})}, 1 \le j \le c \right\}.
    \end{equation}
    Setting $b_\fp:=b_1-d_{\delta_\fp}$, one obtains that $v_{\fp}(M/I^nN) = d_{\delta_\fp} n+b_\fp$ for all $n\gg 0$. This proves the first part of (1). For the second part of (1), further assume that $\mathcal{B}_{IN}^M(I) = \mathcal{A}_N^M(I)$. Analyzing the proof above, one obtains that $v_{\fp}(I^nM/I^{n+1}N) = v_{\fp}(\mathscr{G}_{(n,*)}) = \indeg(\mathscr{L}_{(n,*)})=v_{\fp}(M/I^{n+1}N)$ for all $n\gg 0$, where the last two equalities follow from Lemma~\ref{lem:v-num-indeg} and the two different expressions of $\mathscr{L}_{(n,*)}$.
    
    (2) Note that $\mathcal{A}_N^M(I)$ is a finite non-empty set. Since $I^{n_0}M\subseteq N$ for some $n_0$, it can be verified that each $\fp\in\mathcal{A}_N^M(I)$ satisfies $I\subseteq\fp$. Hence, from the proof of (1), one observes that $\mathscr{G}_{(n,*)} = I^nM/I^{n+1}N\neq 0$ for all $n\gg 0$ (otherwise, if $\mathscr{G}_{(n,*)} = 0$, then $\mathscr{L}_{(n,*)} = 0$, and hence $v_{\fp}(M/I^{n+1}N)=\infty$ for all $n\gg 0$, a contradiction). Thus $\mathcal{B}_{IN}^M(I)$ is also a non-empty set. This will be used later while applying Theorem~\ref{thm:linearity-InM/InN}. Note that $v(M/I^{n+1}N) = \inf\{ v_{\fp}(M/I^{n+1}N) : \fp\in\mathcal{A}_N^M(I) \}$ for all $n\gg 0$. Therefore, since each $\fp\in\mathcal{A}_N^M(I)$ contains $I$, by (1) and \ref{para:inf-lin-func}, one concludes that $v(M/I^{n+1}N)$ is eventually linear in $n$ with the leading coefficient $d_\tau$, where $\tau:=\inf\{\delta_\fp: \fp\in\mathcal{A}_N^M(I)\}$ and $\delta_{\fp}$ is described in \eqref{delta-p}.
    % \old{We prove that $b=d_\gamma$, where $\gamma:=\inf\left\{j:y_j\notin\sqrt{\ann_{R} (M)}\right\}$.
    % First, note that if $y\in\sqrt{\ann_R (M)}$, then $y\in\sqrt{\ann_{\mathscr{R}(J)} (\mathscr{L})}$. So $\delta_{\fp}\ge\gamma$ for every $\fp\in\mathcal{A}_N^M(I)$. Hence $b\ge d_\gamma$. Secondly, in view of Proposition~\ref{prop:L-M-sub}.(2), $v(M/I^{n+1}N)\le v(I^nM/I^{n+1}N)$ for all $n\ge 0$. Moreover, the leading coefficient of the linear function $v(M/I^{n}N)$, and hence that of $v(M/I^{n+1}N)$ is same as $b$, while the leading coefficient of $v(I^nM/I^{n+1}N)$ is $d_\gamma$ by Proposition~\ref{prop:capitaldelta}.
    % % the leading coefficient of $v(M/I^{n+1}N)$, and hence of $v(M/I^nN)$ is bounded above by the leading coefficient of $v(I^nM/I^{n+1}N)$.
    % Thus, comparing the leading coefficients, it follows that $b\le d_\gamma$. Therefore $b = d_\gamma$.
    % }
    We prove that $\tau=\gamma$. First, notice that in the proof of (1), if $y_j\in\sqrt{\ann_{\mathscr{R}(J)} (\mathscr{G})}$, then $y_j\in\sqrt{\ann_{\mathscr{R}(J)} (\mathscr{L})}$. This yields that $\delta_{\fp}\ge\gamma$ for every $\fp\in\mathcal{A}_N^M(I)$. Hence $\tau\ge \gamma$. Secondly, in view of Proposition~\ref{prop:L-M-sub}.(2), $v(M/I^{n+1}N)\le v(I^nM/I^{n+1}N)$ for all $n\ge 0$. Here the leading coefficient of the asymptotic linear function $v(M/I^{n+1}N)$ is same as $d_{\tau}$, while the leading coefficients of $\indeg(I^nM/I^{n+1}N)$ and $v(I^nM/I^{n+1}N)$ are equal to $d_\gamma$ by Theorem~\ref{thm:linearity-InM/InN}. Thus, comparing the leading coefficients, it follows that $d_{\tau}\le d_\gamma$, which implies that $\tau\le \gamma$. So $\tau=\gamma$. It proves the first part of (2). Since each $\fp\in\mathcal{A}_N^M(I)$ contains $I$, the second part of (2) follows from (1).

    (3) 
    % Since $d_1\ge 1$, and $J$ is a reduction ideal of $I$, it follows that $I$ is generated by homogeneous elements of positive degree. So, if $\mathcal{A}_N^M(I)=\emptyset$, then $ M = I^nN \subseteq I^nM $, i.e.,  
    Assume that $(0 :_M y_1) = 0$ and $d_1\ge 1$. In view of (2), it is enough to show that $\gamma = 1$, i.e., $y_1 \notin \sqrt{\ann_{\mathscr{R}(J)} (\mathscr{G})}$. If  possible, let $y_1 \in \sqrt{\ann_{\mathscr{R}(J)} (\mathscr{G})}$. Then $y_1^s\mathscr{G}=0$ for some $s\ge 1$. Therefore, since $\mathscr{G}_{(n,*)} = I^nM/I^{n+1}N$, one obtains that $y_1^sI^nM \subseteq I^{n+s+1}N$ for all $n\ge 0$. Since $J$ is a reduction ideal of $I$, there exists $n_1$ such that $JI^{n_1} = I^{n_1+1}$, and hence $J^nI^{n_1} = I^{n_1+n}$ for all $n\ge 1$. Therefore, $y_1^sI^{n_1}M \subseteq I^{n_1+s+1}N=J^{s+1}I^{n_1}N$. Since $(0:_MI) = 0$, $I^{n_0}M\subseteq N$ and $M\neq 0$, it follows that $I^{n_1}N\neq 0$ and $I^{n_1}M\neq 0$. Moreover, since $J=(y_1,\ldots,y_c)$ and $(0 :_M y_1) = 0$, one derives that $\indeg(y_1^sI^{n_1}M) = sd_1+\indeg(I^{n_1}M)$ and $\indeg(J^{s+1}I^{n_1}N) = (s+1)d_1+\indeg(I^{n_1}N)$.
    % Since $(0:_MI) = 0$, $I^{n_0}M\subseteq N$ and $M\neq 0$, it follows that $I^{s+1}N_u \neq 0$, where $u:= \indeg(N)$. Moreover, $y_1^{s+1}N_u\neq 0$ as $(0 :_N y_1) \subseteq (0 :_M y_1) = 0$. So $\indeg(I^{s+1}N) = \indeg(I^{s+1})+\indeg(N) = (s+1)d_1+\indeg(N)$. Since $(0 :_M y_1) = 0$, $\indeg(y_1^sM) = sd_1+\indeg(M)$.
    Thus
    \begin{align*}
        sd_1+\indeg(I^{n_1}M) &= \indeg(y_1^sI^{n_1}M) \\
        &\ge \indeg(J^{s+1}I^{n_1}N) \quad \mbox{[as $y_1^sI^{n_1}M \subseteq J^{s+1}I^{n_1}N$]} \\
        &= (s+1)d_1+\indeg(I^{n_1}N) \\
        &\ge (s+1)d_1+\indeg(I^{n_1}M) \quad \mbox{[as $I^{n_1}N \subseteq I^{n_1}M$],}
        %\label{indeg-comparison}
    \end{align*}
    which is a contradiction as $d_1\ge 1$. So $y_1 \notin \sqrt{\ann_{\mathscr{R}(J)} (\mathscr{G})}$, and hence $d_\gamma = d_1$.
    % one has that $d_\gamma \ge d_1$. So it is enough to show that $d_\gamma \le d_1$. For $1\le m \le n-1$, the composition
    % $M(-d_1) \stackrel{y_1}{\longrightarrow} M \stackrel{\pi}{\longrightarrow} M/(I^nN :_M y_1^{m-1})$ induces an injective graded $R$-module homomorphism
    % $$ \dfrac{M}{(I^nN :_M y_1^m)}(-d_1) \stackrel{y_1}{\longrightarrow} \dfrac{M}{(I^nN :_M y_1^{m-1})} $$
    % Therefore, repeatedly applying Proposition~\ref{prop:L-M-sub} 
\end{proof}

Analyzing the proof of Theorem~\ref{thm:Conca-gen}, we make the following remarks.

% \new{
% \begin{remark}
%     Replacing $\{I^n\}$ by an $I$-admissible filtration $\{I_n\}$, a similar proof as that of Theorem~\ref{thm:linearity-InM/InN} provides that the functions $v_{\fp}(I_nM/I_nN)$ and $v(I_nM/I_nN)$ are asymptotically linear in $n$.
% \end{remark}
% }
\begin{remark}\label{rmk:equality}
    Let $n\ge 1$ be such that $\Ass_R(I^nM/I^{n+1}N) = \Ass_R(M/I^{n+1}N)$.
    \begin{enumerate}[(1)]
        \item In Theorem~\ref{thm:Conca-gen}.(1), $v_\fp(I^nM/I^{n+1}N)=v_\fp(M/I^{n+1}N)$ whenever $I\subseteq \fp\in \Ass_R(I^nM/I^{n+1}N)$ and $(I^{n+1}N:_M I)=I^nM$, because the equality of the quotients in Lemma~\ref{lem:colon-comparison} holds whenever $(I^{n+1}N:_M I) = I^nM$.
        \item Thus, in Theorem~\ref{thm:Conca-gen}.(2), one has that $v(I^nM/I^{n+1}N)=v(M/I^{n+1}N)$ whenever $(I^{n+1}N:_M I)=I^nM$ and $I^{n_0}M\subseteq N$ for some $n_0$.
    \end{enumerate}
\end{remark}

\section{Examples}\label{sec:examples}

Finally, we provide a number of examples to complement our results.

\begin{example}\label{example-1}
	Let $R=k[X,Y]$ be a standard graded polynomial ring in two variables over a field $k$. Set $M:=R/(XY^b)$, $I:=(X^a)$, $\fp:=(X)$, $\fq:=(Y)$ and $\fm := (X,Y)$, where $a$ and $b$ are some positive integers. Then, $(0:_M I) = \fq^b M \neq 0$. Moreover, the following hold true.
	\begin{enumerate}[(1)]
		\item $\Ass_R(I^nM) = \{ \fq \}$, $\indeg(I^nM/I^{n+1}M) = \indeg(I^nM) = an$ \ and
        \begin{center}
            $v(I^nM) = v_{\fq}(I^nM) = an+(b-1)$ for all $n\ge 1$.
        \end{center}
		\item $\Ass_R(M/IM) = \{ \fp \}$ if $a=1$, and $\Ass_R(M/I^nM) = \{ \fp,\fm \}$ whenever $an\ge 2$.
        \item
        $\Ass_R(I^nM/I^{n+1}M) = \{ \fm \}$ and $v(I^nM/I^{n+1}M) = an + (a+b-2)$ for all $n\ge 1$.
		\item $v_{\fp}(M/IM) = 0$ if $a=1$, and $v_{\fp}(M/I^nM) = b$ whenever $an\ge 2$.
		\item $v_{\fm}(M/I^nM) = an+(b-2)$ whenever $an\ge 2$.
		\item $v(M/IM) = 0$ if $a=1$, and $v(M/I^nM) = b$ whenever $an\ge 2$.
	\end{enumerate}
\end{example}

\begin{proof}
% [Proof of Example~\ref{example-1}]	
	Let $n\ge 1$. Then, $I^nM = (X^{an},XY^b)/(XY^b)$. It follows that
    $$\indeg(I^nM/I^{n+1}M) = \indeg(I^nM) = an.$$
    As $\Ass_R(I^nM) \subseteq \Ass_R(M) = \{ \fp,\fq \}$, and $(I^nM)_{\fp}=0$, one gets that $\Ass_R(I^nM) = \{ \fq \}$. Write the images of $X$ and $Y$ in $M$ as $x$ and $y$ respectively. The main relation of $x$ and $y$ that we have in $M$ is $xy^b=0$. So $(0:_M I) = \fq^b M \neq 0$. Note that each element of $I^nM$ can be written as $x^{an}f(x,y)$ for some polynomial $f(x,y)$ over $k$. Therefore $x^{an}y^{b-1} \in (I^nM)_{an+b-1}$ and $\fq = (0 :_R x^{an}y^{b-1})$. Clearly, $an+b-1$ is the least possible degree of a homogeneous element of $I^nM$ whose annihilator is $\fq$. It follows that $v(I^nM) = v_{\fq}(I^nM) = an+(b-1)$ for all $n\ge 1$. This proves (1).
	
    The quotient $M/I^nM\cong R/(X^{an},XY^b)$ for all $n\ge 1$. So $\Ass_R(M/I^nM) = \{ \fp,\fm \}$ if $an\ge 2$. If $a=1$, then $M/IM\cong R/(X)$, hence $\Ass_R(M/IM) = \{ \fp \}$ and $v_{\fp}(M/IM) = 0$. In the case, when $an\ge 2$, one has that $\fp = \big( (X^{an},XY^b) :_R Y^b \big)$ and $\fm = \big( (X^{an},XY^b) :_R X^{an-1}Y^{b-1} \big)$. These two equalities do not hold if $Y$ and $X^{an-1}Y^{b-1}$ are replaced respectively by any other homogeneous element of lower degree. Thus, one obtains (2), (4) and (5). Consequently, (6) follows.

    For (3), let $n\ge 1$. Note that $\Ass_R(I^nM/I^{n+1}M) \subseteq \Ass_R(M/I^{n+1}M) = \{\fp,\fm\} $ by (2). Therefore, since $(I^nM)_{\fp}=0$, one concludes that $\Ass_R(I^nM/I^{n+1}M) = \{ \fm \}$. Since $I^nM = (X^{an},XY^b)/(XY^b)$, the element $\overline{x^{an+a-1}y^{b-1}} \in I^nM/I^{n+1}M $ has the smallest possible degree such that $\fm =  \ann_R(\overline{x^{an+a-1}y^{b-1}})$. Therefore $v(I^nM/I^{n+1}M) = an + (a+b-2)$.
\end{proof}

\begin{remark}
	In Example~\ref{example-1}, we notice the following.
    \begin{enumerate}[(1)]
    \item The functions $v_{\fp}(M/I^nM)$ and $v(M/I^nM)$ of $n$ are eventually constants. Note that $(0:_M I) \neq 0$. It particularly ensures that the hypothesis $(0:_M I)=0$ in Theorem~\ref{thm:Conca-gen} cannot be removed.
    \item The functions $\indeg(I^nM/I^{n+1}M)$ and $v(I^nM/I^{n+1}M)$ (for all $n\ge 1$)
    % \begin{center}
    %     $\indeg(I^nM/I^{n+1}M) =  an$\ and $v(I^nM/I^{n+1}M) = an + (a+b-2)$\ for all $n\ge 1$
    % \end{center}
    both are linear with the same leading coefficient (as in Theorem~\ref{thm:linearity-InM/InN}), however their constant terms are different, namely $0$ and $(a+b-2)$ respectively. Moreover, the difference $(a+b-2)$ can be arbitrarily large depending on $a$ and $b$.
    \end{enumerate}
%	However, in this example, $v_{\fm}(M/I^nM)$ is eventually linear with the leading coefficient $a$.
\end{remark}

\begin{example}\label{example-2}
	Let $R=k[X,Y]/(XY)$ over a field $k$ with $\deg(X)=\deg(Y)=1$. Write the images of $X$ and $Y$ in $R$ as $x$ and $y$ respectively. Then $R=k[x,y]$. Set $\fm := (x,y)$ and $I:=(x^{d_1},y^{d_2})$, where $d_1 \le d_2$ are some positive integers. Then, $v(R/I^n) = v_{\fm}(R/I^n) = d_1 n - 1$ and $\reg(R/I^n) = d_2 n -1$ for every $n\ge 1$.
%	$\Ass_R(R/I^n) = \{ \fm \}$ for all $n\ge 1$.
%	\[
%		v(R/I^n) = v_{\fm}(R/I^n) = d_1 n - 1 \quad \mbox{and} \quad \reg(R/I^n) = d_2 n -1.
%	\]
%	\mbox{ for all } n\ge 1.
\end{example}

\begin{proof}
% [Proof of Example~\ref{example-2}]
	Let $n\ge 1$. Since $xy=0$, it follows that $I^n=(x^{d_1n},y^{d_2n})$. Therefore $R/I^n$ along with the gradation can be written as
	\[
	k \oplus (kx \oplus ky) \oplus (kx^2 \oplus ky^2) \oplus \cdots \oplus (k x^{d_1n-1} \oplus k y^{d_1n-1}) \oplus k y^{d_1n} \oplus\cdots \oplus k y^{d_2n-1}.
	\]
	Clearly, $\fm = (0 :_R x^{d_1n-1})$, and $d_1n-1$ is the least possible degree of a homogeneous element of $R/I^n$ whose annihilator is $\fm$. So $v(R/I^n) = v_{\fm}(R/I^n) = d_1 n - 1$. Since $R/I^n$ has finite length, $\reg(R/I^n) = \eend(R/I^n) = d_2n-1$.
\end{proof}

\begin{remark}
	Unlike
    % \cite[Thms.~3.1 and 4.1]{FS} and
    \cite[Thm.~1.1]{Conca}, Theorem~\ref{thm:Conca-gen} can be applied for a Noetherian graded ring which is not a domain. The ring $R$ in Example~\ref{example-2} is not a domain, however $(0:_RI)=0$, and $v(R/I^n)$ is linear with the leading coefficient $d_1$.
\end{remark}

\begin{remark}
    In Example~\ref{example-2}, if $d_1 < d_2$, then the difference $\reg(R/I^n)-v(R/I^n) = (d_2-d_1)n$ can be arbitrarily large depending on $n$.
\end{remark}

\begin{example}\label{example-3}
    Let $R=k[X,Y,Z]$ be a standard graded polynomial ring in three variables over a field $k$. Set $\fm := (X,Y,Z)$. Consider $M:=R/(X^3,XY^4)$ and $I:=(X,Y^2,Z^3)$. Then $(0:_M I)=0$. Moreover, the following hold true.
    \begin{enumerate}[(1)]
        \item $\Ass_R(I^nM/I^{n+1}M)=\Ass_R(M/I^nM) = \{ \fm \}$ for all $n\ge 1$.
        \item 
        $\indeg(I^nM/I^{n+1}M)=
        \left\{\begin{array}{ll}
            n &\text{if }n=0,1,2\\
            n+1 &\text{if }n=3\\
            n+3 &\text{if }n=4\\
            2n &\text{if }n\ge 5
        \end{array}\right.$
        \item
        $v(M/I^{n+1}M) = v(I^nM/I^{n+1}M) =
        \left\{\begin{array}{ll}
            n+3 &\text{if }n=0,1,2\\
            n+4 &\text{if }n=3\\
            n+6 &\text{if }n=4\\
            2n+3 &\text{if }n\ge 5
        \end{array}\right.$
    \end{enumerate}
\end{example}

\begin{proof}
    Since $M=R/(X^3,XY^4)$, the element $Z^3\in I$ is $M$-regular, and hence $(0:_M I)=0$. Note that $I^{n}M = \big(I^{n}+(X^3,XY^4)\big)/(X^3,XY^4)$, $M/I^{n}M  \cong R/\big(I^{n}+(X^3,XY^4)\big)$ and $I^{n}M/I^{n+1}M \cong \big(I^{n}+(X^3,XY^4)\big)/\big(I^{n+1}+(X^3,XY^4)\big)$ for all $n\ge 0$. We use $x,y,z$ for the classes of $X,Y,Z$ in $M$ respectively.
    
    (1) Let $n\ge 1$. Note that $I$ and $I^n$ annihilate $I^nM/I^{n+1}M$ and $M/I^nM$ respectively. Therefore every associated prime ideal of each of these modules contains $I=(X,Y^2,Z^3)$, and hence this prime ideal must be same as $\fm$.

    (2) 
    % We have that $X\in\sqrt{\ann_R M}$, $Y^2\notin\sqrt{\ann_R M}$ but it is not $M$-regular, and $Z^3$ is $M$-regular.
    A non-zero element of the least possible degree in the module $I^{n}M/I^{n+1}M$ for $0\le n \le 4$ is given by $1$, $x$, $x^2$, $x^2y^2$ and $x^2y^2z^3$ of degree $0$, $1$, $2$, $4$ and $7$ respectively. For $n\ge 5$, the module $I^nM$ is generated by
    $$x^2y^2(z^3)^{n-3},\ x^2(z^3)^{n-2},\ xy^2(z^3)^{n-2},\ x(z^3)^{n-1} \mbox{  and } (y^2)^{j}(z^3)^{n-j} \mbox{ for } 0\le j \le n,$$
    and their total degrees in $M$ are respectively
    \[
    3n-5, 3n-4, 3n-3,3n-2 \mbox{  and } 3n-j \mbox{ for } 0\le j \le n.
    \]
    Among these degrees, $2n$ is the least possible value. Thus $\overline{y^{2n}}$ is a non-zero element of the least possible degree in $I^nM/I^{n+1}M$ proving $(2)$.

    (3) Consider $n\ge 0$. By (1), $v(M/I^{n+1}M)=v_{\fm}(M/I^{n+1}M)$. Moreover, in view of Lemma \ref{lem:v-num-indeg}, one has that
    \begin{equation}\label{v-num-colon}
        v(M/I^{n+1}M)=v_{\fm}(M/I^{n+1}M)=\indeg \big((I^{n+1}M :_M \fm)/I^{n+1}M\big).
    \end{equation}

    \textbf{Claim}: We claim that $(I^{n+1}M:_M I)=I^nM$ for all $n\ge 0$. The claim is equivalent to that $\big(\big((X^3,XY^4)+I^{n+1}\big):_R I\big)=(X^3,XY^4)+I^n$ for all $n\ge 0$.
    
    For $n\ge 5$, the monomial ideal $((X^3,XY^4)+I^{n+1})$ is minimally generated by
    \begin{align*}
        X^3,XY^4, X^2Y^2(Z^3)^{n-2}, X^2(Z^3)^{n-1}, XY^2(Z^3)^{n-1}, X(Z^3)^{n}\\
        \mbox{and } (Y^2)^{j}(Z^3)^{n+1-j} \mbox{ for } 0\le j \le n+1.
    \end{align*}
    Therefore, using \cite[Sec.~3.2.2]{EJ}, one has that
    \begin{equation*}
        \begin{split}            \Big(\big((X^3,XY^4)+I^{n+1}\big):_RX\Big)=\Big( & X^2,Y^4,XY^2(Z^3)^{n-2}, X(Z^3)^{n-1}, \\
        &Y^2(Z^3)^{n-1},(Z^3)^{n}\Big),\\      \Big(\big((X^3,XY^4)+I^{n+1}\big):_RY^2\Big)=\Big(&X^3,XY^2,X^2(Z^3)^{n-2}, X(Z^3)^{n-1},\\
        (&Y^2)^{j}(Z^3)^{n-j},0\le j\le n\Big) \; \mbox{ and}\\            \Big(\big((X^3,XY^4)+I^{n+1}\big):_R Z^3\Big)=\Big(&X^3,XY^4,X^2Y^2(Z^3)^{n-3}, X^2(Z^3)^{n-2},\\
        XY^2(Z^3)^{n-2},&X(Z^3)^{n-1},(Y^2)^{j}(Z^3)^{n-j},0\le j\le n\Big).\\ 
        \end{split}
    \end{equation*}
    Now $\big(\big((X^3,XY^4)+I^{n+1}\big):_R I\big)$ is the intersection of the three ideals shown above. Moreover, the intersection of two monomial ideals is constructed by taking the lcm of pairs of generators one from each ideal. So the resulting ideal is exactly $(X^3,XY^4)+I^n$, which is minimally generated by 
    \begin{align*}
        X^3, XY^4, X^2Y^2(Z^3)^{n-3}, X^2(Z^3)^{n-2}, XY^2(Z^3)^{n-2}, X(Z^3)^{n-1}
        \mbox{ and } (Y^2)^{j}(Z^3)^{n-j}
    \end{align*}
    where $j$ is varying in $0\le j \le n$.
    
    The cases $n=0,\dots,4$ would require more attention, but instead they can be verified using any mathematical software (e.g., Macaulay2 \cite{M2}). Thus the claim is verified.
    
    Using the above claim, since $I\subseteq\fm$, it follows that
    \begin{equation}\label{colon-containment}
        (I^{n+1}M:_M\fm) \subseteq (I^{n+1}M:_MI) = I^{n}M \mbox{ for every } n\ge 0.
    \end{equation}
    For $n=0,1,\ldots,4$, using Macaulay2 \cite{M2}, one obtains that a non-zero homogeneous element in $(I^{n+1}M:_M\fm)/I^{n+1}M$ of minimum possible degree is given by
    \begin{center}
       $yz^2, xyz^2, x^2yz^2, x^2y^3z^2$ and $x^2y^3z^5$ respectively.
    \end{center}
    Hence, in view of \eqref{v-num-colon}, for $0\le n\le 4$, $v(M/I^{n+1}M)=3,4,5,7$ and $10$ respectively. Let $n\ge 5$. Then, since $\fm\subseteq (I^{n+1}M :_R y^{2n+1}z^2)$ and $y^{2n+1}z^2\notin I^{n+1}M$, one has that $\fm=(I^{n+1}M :_R y^{2n+1}z^2)$. Moreover, one can check that if $g\in I^{n}M$ with $\deg(g)<2n+3$, then $\fm\not=(I^{n+1}M:_R g)$. Therefore, using \eqref{v-num-colon} and \eqref{colon-containment}, it follows $v_{\fm}(M/I^{n+1}M)=2n+3$.

    In view of Remark~\ref{rmk:equality} and the above claim, one obtains that $v(I^nM/I^{n+1}M)=v(M/I^{n+1}M)$ for all $n\ge 0$.
\end{proof}

\begin{remark}
    In Theorem~\ref{thm:intro-Conca-gen}, the leading coefficient of the function $v(M/I^nN)$ is not necessarily same as $\indeg(I)$. In Example~\ref{example-3}, the leading coefficient of $v(M/I^nM)$ is $2$, however $\indeg(I)=1$. In this example, $X \in \sqrt{\ann_{R}(M)} \subseteq \sqrt{\ann_{\mathscr{R}(I)} (\mathscr{G})}$, but $Y^2 \notin\sqrt{\ann_{\mathscr{R}(I)} (\mathscr{G})}$, where $\mathscr{G} = \mathscr{R}(I,M)/\mathscr{R}(I,IM)$. It also ensures that the condition $(0 :_M y_1) = 0$ in Theorem~\ref{thm:intro-Conca-gen}.(3) cannot be removed.
\end{remark}

\section*{Acknowledgments}
The authors thank Aldo Conca for encouraging them to collaborate on the topic, for reading the draft, and giving his valuable comments and suggestions. The first author is partly supported by the MIUR Excellence Department Project awarded to Dipartimento di Matematica, Università di Genova, CUP\, D33C23001110001. The authors are grateful to the anonymous referee for carefully reading the first  draft of the article, and providing many helpful comments and suggestions.
%The author was supported by Core Research Grant (CRG) from SERB, DST, Govt.~of India with the Grant No CRG/2023/001224.
%Saha was supported by Junior Research Fellowship (JRF) from UGC, MHRD, Govt.\,of India.


\begin{thebibliography}{AAAA}
%	
%	\bibitem{AKS} S.B.~Ambhore, K.~Saha and I.~Sengupta, {\it The v-number of binomial edge ideals}, arXiv:2304.06416.
%	
%	\bibitem{Ar09}
%	T.~Araya, {\it The Auslander-Reiten conjecture for Gorenstein rings}, Proc. Amer. Math. Soc. {\bf 137} (2009), 1941--1944.
%	
%	\bibitem{AY98} T.~Araya and Y.~Yoshino, {\it Remarks on a depth formula, a grade inequality and a conjecture of Auslander}, Comm. Algebra {\bf 26} (1998), 3793--3806.
%	
%	\bibitem{AP05} Asadollahi, Javad; Puthenpurakal, Tony J., An analogue of a theorem due to Levin and Vasconcelos. Commutative algebra and algebraic geometry, 9–15, Contemp. Math., 390, Amer. Math. Soc., Providence, RI, 2005.
%	
%	\bibitem{AB69} M. Auslander and M. Bridger, {\it Stable module theory}, Mem. Amer. Math. Soc. {\bf 94} (1969).
%	
%	\bibitem{ADS93} M.~Auslander, S.~Ding and O.~Solberg, {\it Liftings and weak liftings of modules}, J. Algebra {\bf 156}, (1993), 273--317.
%	
%	\bibitem{AR75} M.~Auslander and I.~Reiten, {\it On a generalized version of the Nakayama conjecture}, Proc. Amer. Math. Soc. {\bf 52} (1975), 69--74.
%	
%	\bibitem{Avr96} L. L. Avramov, {\it Modules with extremal resolutions}, Math. Res. Lett. {\bf 3} (1996), 319--328.
%	
%	\bibitem{Avr98} L. L. Avramov, {\it Infinite free resolutions}, Six lectures on commutative algebra, Bellaterra 1996, Progr. Math. 166, Birkh\"{a}user, Basel, (1998), 1--118.
%	
%	\bibitem{AGP97} L.~L.~Avramov, V.~N.~Gasharov and I.~V.~ Peeva, {\it Complete intersection dimension}, Inst. Hautes \'{E}tudes Sci. Publ. Math. {\bf 86} (1997), 67--114.
%	
%	\bibitem{AM02} L.~L.~Avramov and A.~Martsinkovsky, {\it Absolute, relative, and Tate cohomology of modules of finite Gorenstein dimension}, Proc. London Math. Soc. {\bf 85} (2002), 393--440.
	
	\bibitem{BCH13} A.~Bagheri, M.~Chardin, and H.T.~H\`{a}, {\it The eventual shape of Betti tables of powers of ideals}, Math. Res. Lett. 20, 6 (2013), 1033--1046.
	
	\bibitem{BM} P.~Biswas and M.~Mandal, {\it A study of v-number for some monomial ideals}, \href{https://arxiv.org/pdf/2308.08604v1.pdf}{arXiv:2308.08604}.
	
	\bibitem{Br79} M.~Brodmann, {\it Asymptotic stability of $\Ass(M/I^nM)$}, Proc. Amer. Math. Soc. {\bf 74} (1979), 16--18.

    \bibitem{BCRV22} W.~Bruns, A.~Conca, C. Raicu and M.~Varbaro, \emph{Determinants, Gr\"obner Bases and Cohomology}, Springer Monographs in Mathematics, 2022.

    \bibitem{BCV22} W.~Bruns, A.~Conca and M.~Varbaro, \emph{Castelnuovo-Mumford Regularity and Powers}, Commutative Algebra: Expository Papers Dedicated to David Eisenbud on the Occasion of his 75th Birthday (2022), 147--158 .

    \bibitem{BH93} W.~Bruns and J.~Herzog, \emph{Cohen-{M}acaulay rings}, Cambridge Studies in Advanced Mathematics, vol.~39, Cambridge University Press, Cambridge, 1993.
	
%	\bibitem{Bu68} L.~Burch, {\it On ideals of finite homological dimension in local rings}, Proc. Camb. Philos. Soc. {\bf 64}, (1968), 941--948.
%
%	\bibitem{CDT14} O.~Celikbas, H.~Dao and R.~Takahashi, Modules that detect finite homological dimensions. Kyoto J. Math. {\bf 54} (2014), 295--310.
%	
%	\bibitem{CGSZ18} O.~Celikbas, M.~Gheibi, A.~Sadeghi and M.R.~Zargar, {\it Homological dimensions of rigid modules}, Kyoto J. Math. {\bf 58} (2018), 639--669.
%	
%	\bibitem{CIST18} O.~Celikbas, K.~Iima, A.~Sadeghi and R.~Takahashi, On the ideal case of a conjecture of Auslander and Reiten. Bull. Sci. Math. {\bf 142} (2018), 94--107.
%
%	\bibitem{CS16} Olgur Celikbas and Sean Sather-Wagstaff, {\it Testing for the Gorenstein property}, Collect. Math. {\bf 67} (2016), 555--568.

\bibitem{CGN22} 
M. Chardin, D. Ghosh and N. Nemati, {\it The (ir)regularity of Tor and Ext}, Trans. Amer. Math. Soc. 375 (2022), 47--70.

%	\bibitem{Ch00} L.W.~Christensen, {\it Gorenstein dimensions}, Lecture Notes in Mathematics {\bf 1747}, Springer-Verlag, Berlin, 2000.
%
%	\bibitem{CFH02} 
%	L. W. Christensen; A Frankild; H Holm; {\it On Gorenstein projective, injective and flat dimensions-A functorial description with applications}, J.~of Algebra {\bf 302}, 2006, 231--279.

\bibitem{Ci23} Y.~Civan, {\it The v-number and Castelnuovo-Mumford regularity of graphs}, J.~Algebraic Combin. {\bf 57} (2023), 161--169.

\bibitem{Conca} A.~Conca, {\it A note on the v-invariant}, Proc. Amer. Math. Soc. {\bf 152} (2024), 2349-2351.

\bibitem{CSTPV} S.M.~Cooper, A.~Seceleanu, S.O.~Toh\v{a}neanu, M.~Pinto Vaz and R.H.~Villarreal, {\it Generalized minimum distance functions and algebraic invariants of Geramita ideals}, Adv.~Appl. Math. {\bf 112} (2020), 101940, 34 pp.
%	
%	\bibitem{CHKV06} A. Corso, C. Huneke, D. Katz and W.V. Vasconcelos, {\it Integral closure of ideals and annihilators of homology} in Commutative Algebra, Lect. Notes Pure Appl. Math. 244, Chapman \& Hall/CRC, Boca Raton, FL, 2006, 33--48.
	
	\bibitem{CHT99} S.D.~Cutkosky, J.~Herzog, N.V.~Trung, {\it Asymptotic behaviour of the Castelnuovo–Mumford regularity}, Compositio Math. {\bf 118} (1999) 243--261.
	
%	\bibitem{DKT20} H.~Dao, T.~Kobayashi and R.~Takahashi, {\it Burch ideals and Burch rings}, Algebra Number Theory {\bf 14} (2020), 2121--2150.
%
%	\bibitem{DEL21} H.~Dao, M.~Eghbali and J.~Lyle, {\it Hom and Ext, revisited}, J. Algebra {\bf 571} (2021), 75--93.
%	
%	\bibitem{DG22} S.~Dey and D.~Ghosh, {\it Complexity and rigidity of Ulrich modules, and some applications}, Math. Scand. {\bf 129} (2023), 209--237.
%	
%	\bibitem{DK23} S.~Dey and T.~Kobayashi, {\it Vanishing of $($co$)$homology of Burch and related submodules}, Illinois J. Math. {\bf 67} (2023), 101--151.
%	
%	\bibitem{DMT14} K.~Divaani-Aazar, F.M.A.~Mashhad and M.~Tousi, {\it On the existence of certain modules of finite Gorenstein homological dimensions}, Comm. Algebra {\bf 42} (2014), 1630--1643.
%	
%	\bibitem{DMTT20} K.~Divaani-Aazar, F.M.A.~Mashhad, E.~Tavanfar and M.~Tousi, {\it On the new intersection theorem for totally reflexive modules}, Collect. Math. {\bf 71} (2020), 369--381.
%	
%	\bibitem{Dut89} S. P. Dutta, {\it Syzygies and homological conjectures}, in: Commutative Algebra, Berkeley, CA, 1987, in: Math. Sci. Res. Inst. Publ., vol. 15, Springer, New York, 1989, pp. 139--156.

\bibitem{EJ} V.~Ene and J.~Herzog, {\it Gr\"{o}bner Bases in Commutative Algebra}, Amer. Math. Soc., Graduate Studies in Mathematics {\bf 130}, 2012.

\bibitem{FS} A.~Ficarra, E.~Sgroi, {\it Asymptotic behaviour of the v-number of homogeneous ideals}, \href{https://arxiv.org/abs/2306.14243}{arXiv:2306.14243}.

%	\bibitem{GP90} V.N.~Gasharov and I.~Peeva, {\it Boundedness versus periodicity over commutative local rings}, Trans. Amer. Math. Soc. {\bf 320} (1990), 569--580. 
%	
%	\bibitem{GKR93} A.V.~Geramita, M.~Kreuzer and L.~Robbiano, {\it Cayley–Bacharach schemes and their canonical modules}, Trans. Amer. Math. Soc. {\bf 339} (1993) 163--189.
%	
%	\bibitem{Ge01} A.A.~Gerko, {\it On homological dimensions}, Sb. Math. 192 (2001), no. 7-8, 1165–1179.
%		
%	\bibitem{Gh16} D.~Ghosh, {\it Asymptotic linear bounds of Castelnuovo--Mumford regularity in multigraded modules}, J.~Algebra {\bf 445} (2016), 103--114.
%		
%	\bibitem{GGP18} D.~Ghosh, A.~Gupta and T.J.~Puthenpurakal, {\it Characterizations of regular local rings via syzygy modules of the residue field}, J. Commut. Algebra {\bf 10} (2018), 327--337.
%	
%	\bibitem{GP22} D.~Ghosh and T.J.~Puthenpurakal, {\it Integrally closed $\fm$-primary ideals have extremal resolutions}, Arch. Math. (Basel) {\bf 121} (2023), 123-131.
% 
%	\bibitem{Go87} S. Goto, {\it Integral closedness of complete-intersection ideals}, J. Algebra {\bf 108} (1987), 151--160.
%	
%	\bibitem{GT17} S.~Goto and R.~Takahashi, {\it On the Auslander-Reiten conjecture for Cohen-Macaulay local rings}, Proc. Amer. Math. Soc. 145 (2017), no. 8, 3289–3296. 

\bibitem{M2} D.R.~Grayson and M.E.~Stillman, {\it Macaulay2, a software system for research in algebraic geometry}, \href{http://www2.macaulay2.com}{http://www2.macaulay2.com}

%	\bibitem{HH05} D. Hanes and C. Huneke, {\it Some criteria for the Gorenstein property}, J. Pure Appl. Algebra {\bf 201} (2005), 4--16.
%
%	\bibitem{Ho04} H.~Holm, {\it Rings with finite Gorenstein injective dimension}, Proc. Amer. Math. Soc. {\bf 132} (2004), 1279--1283.
%	
%	\bibitem{HL04} C.~Huneke and G.J.~Leuschke, {\it On a conjecture of Auslander and Reiten}, J.  Algebra {\bf 275} (2004), 781--790.
%
%\bibitem{JM23} D.~Jaramillo and L.~Seccia, {\it Connected domination in graphs and v-numbers of binomial edge ideals}, Collect. Math. (2023).

\bibitem{JV21} D.~Jaramillo and R.H.~Villarreal, {\it The v-number of edge ideals}, J.~Combin. Theory Ser. A {\bf 177}, 105310 (2021).

%	\bibitem{JL07} D.A.~Jorgensen and G.J.~Leuschke, {\it On the growth of the Betti sequence of the canonical module}, Math. Z. {\bf 256} (2007), 647--659.
%
%	\bibitem{Ta19} Ehsan Tavanfar, {\it A Trilogy, Given By Complete Tensor Product of Complete Rings Over The Coefficient Ring}, \href{https://arxiv.org/pdf/1911.11290.pdf}{arXiv:1911.11290}.
%	
%	\bibitem{Fo75} Foxby, H.B., {\it Quasi-perfect modules over Cohen-Macaulay rings}, Math. Nachr. {\bf 66} (1975), 103--110.
%	
%	\bibitem{Fo77} H.-B.~Foxby, {\it Isomorphisms between complexes with applications to the homological theory of modules}, Math. Scand. {\bf 40} (1977), 5--19.
%	
%	\bibitem{Gh2} D. Ghosh, {\it Gorensteinness of short local rings in terms of the vanishing of Ext and Tor}, \href{https://arxiv.org/pdf/1808.07711.pdf}{arXiv:1808.07711}.
%	
%	\bibitem{GGP18} D. Ghosh, A. Gupta and T. J. Puthenpurakal, {\it Characterizations of regular local rings via syzygy modules of the residue field}, J. Commut. Algebra {\bf 10} (2018), 327--337.
%	
%	\bibitem{Go84} Golod, E. S., {\it G-dimension and generalized perfect ideals}, Algebraic geometry and its applications, Trudy Mat. Inst. Steklov. {\bf 165} (1984), 62--66.
%	
%	\bibitem{Gu80} T.H.~Gulliksen, {\it On the deviations of a local ring}, Math. Scand. {\bf 47} (1980), 5--20.
%	
%	\bibitem{IP07} S.~Iyengar and T.J.~Puthenpurakal, {\it Hilbert–Samuel functions of modules over Cohen-Macaulay rings}, Proc. Amer. Math. Soc. {\bf 135} (2007), 637--648.

\bibitem{KW04} D.~Katz and E.~West, {\it A linear function associated to asymptotic prime divisors}, Proc. Amer. Math. Soc. {\bf 132} (2004), 1589--1597.

%	\bibitem{KOT21} K.~Kimura, Y.~Otake and R.~Takahashi, {\it Maximal Cohen-Macaulay tensor products and vanishing of Ext modules}, \href{https://arxiv.org/pdf/2106.08583v1.pdf}{arXiv:2106.08583}.
	
	\bibitem{Ko00} V.~Kodiyalam, {\it Asymptotic behaviour of Castelnuovo-Mumford regularity}, Proc. Amer. Math. Soc. {\bf 128} (2000) 407--411.

%	\bibitem{LV68} G. Levin and W. V. Vasconcelos, {\it Homological dimensions and Macaulay rings}, Pacific J. Math {\bf 25} (1968), 315--323.
%	
%	\bibitem{LM20} J.~Lyle and J.~Monta\~{n}o, {\it Extremal growth of Betti numbers and trivial vanishing of (co)homology}, Trans. Amer. Math. Soc. {\bf 373} (2020), 7937--7958.
%
%	\bibitem{Mar96} A.~Martsinkovsky, {\it A remarkable property of the (co) syzygy modules of the residue field of a nonregular local ring},  J. Pure Appl. Algebra {\bf 110} (1996), 9--13.
%	
%	\bibitem{CMSZ} Majid Rahro Zargar, Olgur Celikbas, Mohsen Gheibi, and Arash Sadeghi, {\it Homological dimensions of rigid modules}, Kyoto J. Math. {\bf 58}, (2018), 639--669.
%	
%	\bibitem{Mat86} H.~Matsumura, {\it Commutative Ring Theory}, Cambridge University Press, Cambridge, 1986.

\bibitem{ME79} S.~McAdam and P.~Eakin, {\it The asymptotic ass}, J.~Algebra {\bf 61} (1979) 71--81.

%	\bibitem{NS17} S.~Nasseh and S.~Sather-Wagstaff, {\it Vanishing of Ext and Tor over fiber products}, Proc. Amer. Math. Soc. {\bf 145} (2017), 4661--4674.
%	
%	\bibitem{PS73} C.~Peskine and L.~Szpiro, {\it Dimension projective finie et cohomologie locale. Applications \`{a} la d\'{e}monstration de conjectures de M.~Auslander, H.~Bass et A.~Grothendieck}, Inst. Hautes \'{E}tudes Sci. Publ. Math., No. 42, (1973), 47--119.
%	
%	\bibitem{Rob87} P.~Roberts, {\it Le th\'{e}or\`{e}me d'intersection}, C.R. Acad. Sci. Paris 304 (1987), 177--180.
%	
%	\bibitem{ST19} A.~Sadeghi and R.~Takahashi, {\it Two generalizations of Auslander-Reiten duality and applications}, Illinois J. Math. {\bf 63} (2019), 335--351.

\bibitem{Ka23} K.~Saha, {\it The v-Number and Castelnuovo-Mumford Regularity of Cover Ideals of Graphs}, Int. Math. Res. Not. IMRN, 2023, rnad277.%, %\href{https://doi.org/10.1093/imrn/rnad277}{https://doi.org/10.1093/imrn/rnad277}.

\bibitem{SS22} K.~Saha and I.~Sengupta, {\it The v-number of monomial ideals}, J.~Algebraic Combin. {\bf 56} (2022), 903--927.

%	\bibitem{SSW01} S.~Sather-Wagstaff, {\it Semidualizing Modules}, \href{https://www.ndsu.edu/pubweb/~ssatherw/DOCS/sdm.pdf}{Notes available in his webpage}.
%	
%	\bibitem{SH06} I. Swanson and C. Huneke, {\it Integral closure of ideals, rings, and modules}, London Mathematical Society Lecture Note Series {\bf 336}, Cambridge University Press, Cambridge, 2006.
%	
%	\bibitem{Ro09} J. J. Rotman, {\it An Introduction to Homological Algebra}, Second Edition, Universitext, Springer, New York, 2009.
%	
%	\bibitem{Ta04} R.~Takahashi, {\it Some characterizations of Gorenstein local rings in terms of G-dimension}, Acta Math. Hungar. {\bf 104} (2004), 315--322.
%	
%	\bibitem{Tak06} R. Takahashi, {\it Syzygy modules with semidualizing or G-projective summands}, J. Algebra {\bf 295} (2006), 179--194.
%
%	\bibitem{Ta19} E.~Tavanfar, {\it Test modules, weakly regular homomorphisms and complete intersection dimension}, J. Commut. Algebra {\bf 15} (2023), 417--455.
	
	\bibitem{TW05} N.V.~Trung and H.-J.~Wang, {\it On the asymptotic linearity of Castelnuovo-Mumford regularity}, J. Pure Appl. Algebra {\bf 201} (2005), 42--48.
	
%	\bibitem{Ulr84} B. Ulrich, {\it Gorenstein rings and modules with high numbers of generators}, Math. Z. {\bf 188} (1984), 23--32.
%	
%	\bibitem{Ve02} O.~Veliche, {\it Construction of modules with finite homological dimensions}, J. Algebra {\bf 250} (2002), 427--449.
%	
%	\bibitem{We94} C.A.~Weibel, {\it An introduction to homological algebra}, Cambridge University Press, Cambridge, 1994.
	
	\bibitem{We04} E.~West, {\it Primes associated to multigraded modules} J.~Algebra {\bf 271} (2004), 427-453.
	
\end{thebibliography}
\end{document}